\documentclass[a4paper,12pt]{article}
\usepackage{latexsym}
\usepackage{amsmath}
\usepackage{amssymb}
\usepackage{tikz}
\usepackage{color}

\usepackage{theorem}
\newtheorem{theorem}{Theorem}[section]
\newtheorem{proposition}[theorem]{Proposition}
\newtheorem{lemma}[theorem]{Lemma}

\newtheorem{MTA}{Theorem A}
\newtheorem{MTB}{Theorem B}

\theorembodyfont{\rmfamily}
\newtheorem{proof}{\textmd{\textit{Proof.}}}

\newtheorem{remark}[theorem]{Remark}

\makeatletter

\@addtoreset{equation}{section}
\makeatother

\newcommand{\qedd}{\hfill \Box}

\newcommand{\Sph}{\ensuremath{\mathbb{S}}}

\title{ On the Structure of the Cut Locus in Certain Classes of Warped Product Manifolds
\footnote{
2020 Mathematics Subject Classification.\,\, Primary\:\: 53C22.}
\footnote{
Keywords:  
  cut locus,  
  half period function, local tree.
  }
}
\author{Nathaphon Boonnam}
\date{}
\begin{document}

%%%%%%%%%%%%%%%%%%%%%%%%%%%%%%%%%%%%%%%%%%%%%%%%%%%

\maketitle

\begin{abstract} In this paper, we determine the structure of the cut locus for a class of complete warped product manifolds $M=(R^1\times S^1,dt^2+m(t)^2d\theta^2)$, where the warping function $m(t)$ is positive and even, under the assumption that   
there exists a positive number $t_1$ such that  
the Gaussian curvature  is
nonpositive on the domain  
 $t^{-1}\left((t_1,\infty) \right)$ and  is
 decreasing 
along the subarc 
$t^{-1}([0,t_1])\cap\theta^{-1}(0)$ of the upper half meridian.

\end{abstract}

\section{Introduction}

The study of cut loci on Riemannian manifolds represents one of the most important areas in differential geometry
 since the pioneering work of H. Poincaré \cite{P}.
J. Hebda \cite{H} established that the cut locus of a point on a complete 2-dimensional Riemannian manifold  forms a local tree structure (see also   \cite{ShT} and \cite{SST}). Notice that the cut locus structure for a surface of revolution can be complicated. In fact, Gluck and Singer \cite{GS} constructed a 2-sphere of revolution which has a non-triangluable cut locus.

D. Elerath \cite{E} would be the  first researcher who studied the cut locus for a classical surface of revolution.
He determined the cut locus of a point  for a
flattening surface of revolution that he calls. 
He noticed the monotonicity of the Gaussian curvature and defined a flattening surface of revolution as a classical surface of revolution that has the property of the decline of the Gaussian curvature along a meridian.  Typical examples of flattening surfaces are 
a paraboloid and a 2-sheeted hyperboloid.
This idea is originated in the study of a 2-sheeted hyperboloid by H. von Mangoldt \cite{Ma} in 1881.
Since it was achieved before the introduction of the concept of the cut locus  by  H. Poincar\'e
 \cite{P},  he did not try to determine the cut locus structure of  2-sheeted hyperboloids of revolution. However, he found a sufficient condition for a point on a 2-sheeted hyperboloid to be a pole.
Well-known and very familiar classical surfaces of revolution
have the property  that the Gaussian curvature is monotone along a subarc of a meridian. After Elerath's work,
the cut locus structure was determined for the ellipsoids of revolution \cite{ST} and  the tori of revolution \cite{GMST} in  3-dimensional Euclidean space 
by making use of this property and the Rauch comparison theorem.
In fact, the structure was determined for  certain classes of abstract surfaces of revolution containing such classical surfaces (see \cite{BCST}, \cite{ST}, \cite{TASY}, \cite{TASY1}, and \cite{T2}).

  It was proved in \cite{ST} that if the Gaussian curvature is monotone along a subarc of a subarc of a meridian, then  
the half period function defined 
in \eqref{eq2.2} is monotone and in \cite{BCST},
the cut locus structure was  determined for a 2-sphere of revolution if the half period function is decreasing. 
Notice that for the family of 2-spheres of revolution $(\Sph^2, dr^2+m_\lambda(r)^2d\theta^2)$ given in \cite{BCST}, the Gaussian curvature of the 2-sphere is not monotone along a meridian from each pole to the point on the equator if $\lambda>2. $ 
Thus, the monotonicity  is not always a suitable assumption to determine the cut locus structure (see also  \cite{TASY} and \cite{TASY1}).

P. Chitsakul \cite{C1} determined the cut locus structure for  a  cylinder of revolution under the assumption that the Gaussian curvature is decreasing along the upper half meridian and   generalized in \cite{C2}.

In this paper, we will  prove the following theorem.

\begin{MTA} Let $(M,ds^2)$ be a complete Riemannian manifold $R^1\times S^1$ with a warped product metric $ds^2=dt^2+m(t)^2d\theta^2$ of the straight line $(R^1,dt^2)$ and  the unit circle $(S^1,d\theta^2)$ with a warping function $m.$ 
Suppose that $M$ has a reflective symmetry with respect to the equator $t=0$ and
there exists a positive number $t_1$ such that  
the Gaussian curvature  is
nonpositive on  the domain  
 $t^{-1}\left((t_1,\infty) \right)$ and  is
 decreasing 
along the subarc 
$t^{-1}([0,t_1])\cap\theta^{-1}(0)$ of the upper half meridian.  
Then the cut locus of a point $q\in\theta^{-1}(0) $ is given as follows:
\begin{enumerate}
    \item If $|t(q)|<t_0:=\sup\{t>0|m'(t)<0\}$, then
    $$C_q=\theta^{-1}(\pi)\cup\left( t^{-1}(-t(q))\cap\theta^{-1}\left([\varphi(m(t(q))),   2\pi-\varphi(m(t(q)))]\right)\right),$$
    where $\varphi(\nu):=2\int_0^{\xi(\nu)}\frac{\nu}{m(x)\sqrt{m(x)^2-\nu^2}}dx$ and $\xi(\nu):=(m|_{[0,t_0)})^{-1}(\nu)$ for $\nu\in(m(t_0),m(0)),$
    \item If $|t(q)|\geq t_0$ or if $\varphi(m(t(q)))\geq\pi,$ then
    $C_q=\theta^{-1}(\pi),$ which is the meridian  opposite to $q.$  
\end{enumerate}
\end{MTA}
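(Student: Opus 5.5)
Throughout, geodesics are handled with the Clairaut relation. Let $\gamma$ be a unit speed geodesic emanating from $q=(t(q),0)$. Its Clairaut constant is $\nu=m(t(\gamma(s)))\cos\angle(\dot\gamma,\partial_t)$, which is constant along $\gamma$. By the reflective symmetry $t\mapsto -t$ and the symmetry $\theta\mapsto-\theta$, we may assume $t(q)\ge 0$ and $\theta(\dot\gamma(0))\ge0$.

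\textbf{Step 1: easy inclusions.} The map $\theta\mapsto -\theta$ is an isometry fixing $q$. It follows that $C_q\supset\theta^{-1}(\pi)$: any point on the opposite meridian is joined to $q$ by two distinct minimal geodesics that are mirror images of each other, unless the minimal geodesic is unique and lies on a meridian through $q$, which is impossible. Conversely, $C_q$ is a local tree that is invariant under this isometry (Hebda). Every geodesic that does not cross $\theta=\pi$ can only be cut by another geodesic from $q$ winding the same way. So the task reduces to finding which geodesics $\gamma$ with $0<\theta(\gamma)<\pi$ reach a cut point before $\theta^{-1}(\pi)$.

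\textbf{Step 2: behaviour of geodesics.} Let $t_0$ be the number defined in Theorem A. On $(0,t_0)$ the function $m$ decreases; for $t>t_0$ it is nondecreasing and $G=-m''/m\le0$ beyond $t_1$. First suppose $t(q)\ge t_0$, so that $m(t(q))$ is the minimum of $m$ on $[t(q),\infty)$ and also $\le m(0)$. A geodesic whose Clairaut constant satisfies $|\nu|<m(t(q))$ never turns back in $t$ before escaping to one of the ends. A geodesic that goes downward crosses the neck region $t\in[-t_0,t_0]$, where $m\ge m(t(q))$, so it cannot turn there; it proceeds to the lower end without returning. Hence no two geodesics from $q$ meet except symmetric pairs on $\theta=\pi$. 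Together with nonpositive curvature on the ends (no conjugate points there), this yields $C_q=\theta^{-1}(\pi)$, which is case 2 for $|t(q)|\ge t_0$.

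\textbf{Step 3: the case $t(q)<t_0$.} Geodesics with $|\nu|$ slightly less than $\nu_q:=m(t(q))$ and initially heading downward oscillate in the band $|t|\le\xi(\nu)$, bouncing between the parallels $t=\pm\xi(\nu)$. The $\theta$-advance between consecutive turning points is the half period $\varphi(\nu)$ of \eqref{eq2.2}. The decreasing-curvature hypothesis on $[0,t_1]$ gives, by the result of \cite{ST} quoted in the introduction, that $\varphi$ is monotone; I expect it to be decreasing in $\nu$ on $(m(t_0),m(0))$. Consider the limiting geodesic with $\nu\to\nu_q$: it is tangent to the parallel $t=t(q)$ at $q$, reaches the mirror parallel $t=-t(q)$ at $\theta=\varphi(\nu_q)$, and is tangent there. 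The geodesics $\gamma_{\pm}$ leaving $q$ with constants $\pm\nu$ near $\nu_q$ (one initially up, one initially down) should meet on $t^{-1}(-t(q))$. Using the reflection $t\mapsto -t$ composed with a rotation, I would show that the point $(-t(q),\theta)$, for $\theta\in[\varphi(\nu_q),\pi]$, is reached by two distinct minimal geodesics symmetric under the isometry $(t,\theta)\mapsto(-t,\theta_0-\theta)$ fixing that segment setwise. So the segment $t^{-1}(-t(q))\cap\theta^{-1}([\varphi(\nu_q),2\pi-\varphi(\nu_q)])$ lies in $C_q$, and its endpoint $(-t(q),\varphi(\nu_q))$ is the first conjugate point along the limiting geodesic. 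If $\varphi(\nu_q)\ge\pi$, this segment collapses into $\theta^{-1}(\pi)$, which gives the second alternative of case 2.

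\textbf{Main obstacle: the reverse inclusion.} It remains to show that nothing else is in $C_q$. For this I would use that the cut locus is a local tree whose endpoints are conjugate points. The monotonicity of $\varphi$, via a Rauch-type comparison, should give that the first conjugate point along any geodesic with $|\nu|<\nu_q$ has $\theta\ge\varphi(\nu)\ge\varphi(\nu_q)$, so any endpoint of $C_q$ off the meridian $\theta^{-1}(\pi)$ must be $(-t(q),\pm\varphi(\nu_q))$. A tree with exactly these endpoints that contains the set found in Step 3 must coincide with it. I expect controlling conjugate points for geodesics entering the ends, where only $G\le0$ is known beyond $t_1$, together with the monotonicity of $\varphi$, to be the hardest part.
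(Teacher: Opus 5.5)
Your outline has the right overall shape: reduce to $|\theta|\le\pi$, find equal-length geodesics meeting on $t^{-1}(-t(q))$, then use the local tree structure. But the two steps that actually carry the proof are missing.

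\textbf{Decline of $\varphi$.} This does not follow from \cite{ST}. The hypothesis makes the curvature monotone only on $[0,t_1]$, whereas $\xi(\nu)$ runs over all of $(0,t_0)$ and $t_1<t_0$ is allowed; on $[t_1,t_0]$ you only know $G\le 0$. Proving the decline under exactly this weakened hypothesis is the new content of the paper (the fully monotone case is \cite{C1}). The missing idea is Lemma \ref{lem3.1} with Proposition \ref{prop3.2}. The representation $\varphi(\nu)=-2\int_0^\infty A\circ\xi(\sqrt{u(\tau,\nu)})\,d\tau/(a^2\tau^2+1)$, with $A=\sqrt{a^2-m^2}/m'$, reduces everything to $A'\le 0$, i.e.\ to $F=m'^2+\frac{m''}{m}(a^2-m^2)\ge 0$ on $[0,t_0]$. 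This holds because $F(0)=0$ and $F'=(m''/m)'(a^2-m^2)\ge 0$ on $[0,\min\{t_0,t_1\}]$, while on $[t_1,t_0]$ every term of $F$ is nonnegative.

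\textbf{Minimality.} Nothing in Step 3 shows that the two geodesics are \emph{minimal} up to their meeting point. (They must have the \emph{same} Clairaut constant $\nu$, one leaving $q$ upward and one downward.) The involution $(t,\theta)\mapsto(-t,\varphi(\nu)-\theta)$ does not fix the parallel $t=-t(q)$; it swaps $q$ with the meeting point and carries $\gamma_\nu$ onto the reversal of $\beta_\nu$. That is a neat substitute for Lemmas \ref{lem4.1}--\ref{lem4.2}, but equal length is not minimality. The paper gets minimality (Proposition \ref{prop4.4}) from Lemma \ref{lem4.3}: a geodesic from $q$ to $(-u,\theta)$ with $\theta>\varphi(m(u))$ has Clairaut constant in $(m(t_0),m(u)]$. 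Otherwise it is never tangent to a parallel and its $\theta$-advance $\int_{-|u|}^{|u|}f(x,\nu_1)\,dx$ is smaller than $\varphi(m(u))$. Combine this with monotonicity of $\varphi$ and $l'=\nu\varphi'$ (Lemma \ref{lem2.1}). A shorter competitor would be some $\gamma_{\nu_1}$ or $\beta_{\nu_1}$ with $\varphi(\nu_1)=\varphi(\nu_0)$. Then $\varphi$, hence $l$, is constant between $\nu_0$ and $\nu_1$, contradicting $l(\nu_1)<l(\nu_0)$.

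\textbf{Step 2 is wrong as written.} For $t(q)>t_0$ one has $m\le m(t(q))$ on $[t_0,t(q)]$, and $m(t(q))$ may exceed $m(0)$. So a downward geodesic with $m(t_0)<|\nu|<m(t(q))$ cannot get below the parallel $t=c\in(t_0,t(q))$ with $m(c)=|\nu|$. It never reaches the neck and typically turns back toward the upper end. The reason you give for no two geodesics from $q$ meeting is therefore false. You also have no control of conjugate points along geodesics that do cross the possibly positively curved neck. The paper argues instead (Proposition \ref{prop4.6}):
\begin{enumerate}
\item geodesics staying in $t\ge t_0$ have no conjugate points, since $G\le 0$ there;
\item a geodesic from $q$ that reaches $t=t_0$ has $|\nu|<m(t_0)=\min m$, is never tangent to a parallel, and so has no conjugate points by \cite[Proposition 7.2.2]{SST};
\item hence $\exp_{\tilde q}$ is nonsingular on the simply connected cover $\widetilde M$, the cut locus of $\tilde q$ there is empty, and only $\theta^{-1}(\pi)$ remains in $M$.
\end{enumerate}

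\textbf{Reverse inclusion.} Yours rests on a Rauch-type conjugate-point estimate that you have not proved, and the paper needs none. Once minimality is known, the limits $\gamma_{m(t_0)}$, $\beta_{m(t_0)}$ of the minimal segments are rays, because $l(\nu)\to\infty$ as $\nu\to m(t_0)$. The minimal segments $\gamma_\nu|_{[0,l(\nu)]}$, $\beta_\nu|_{[0,l(\nu)]}$ fill the region between these rays, so it contains no cut points outside $A_u$. In the domains $D^{\pm}$ beyond the rays, every minimal geodesic from $q$ has Clairaut constant $<m(t_0)$, is never tangent to a parallel, and carries no conjugate point (Proposition \ref{prop4.5}).
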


\begin{remark}\label{rem1.1}
If the Gaussian curvature of the manifold $M$ is decreasing along the upper half meridian, then by
\cite[Theorem 2.2.1]{SST} 
 the Gaussian curvature is nonpositive outside of a bounded domain $t^{-1}((-t_1,t_1))$ for some $t_1>0.$
 Under the assumption
 that the Gaussian curvature of the manifold $M$ is decreasing along the upper half meridian,  the conclusion of {\bf Theorem A} was proved   
in \cite{C1}. 
\end{remark}

\section{Preliminaries} First, let us review here the definition  of the cut locus.
Let $M$ denote a complete Riemannian manifold  and $\gamma:[0,a]\to M$  a minimal geodesic segment emanating from a point $p=\gamma(0)$ to $\gamma(a)$. 
 The end point $\gamma(a)$ of the geodesic segment $\gamma$ is called a  \emph{cut point} of $p$ along $\gamma$ if any geodesic extension $\tilde{\gamma}:[0,a+\varepsilon]\to M$ of $\gamma$ is not minimal anymore.
The \emph{cut locus} $C_p$ of the point $p$  is the set of all cut points of $p$ along minimal geodesic segments emanating from the point $p.$

From now on,
 $(M,ds^2)$ denotes  a complete Riemannian manifold $R^1\times S^1$ with a warped product metric $ds^2=dt^2+m(t)^2d\theta^2$ 
 of the real line $(R^1,dt^2)$  and the unit circle $(S^1,d\theta^2).$ We assume that
 $m:R\to (0,\infty)$ is a smooth even function and that there exists a positive number $t_1$  such that $m''(x)\geq 0 $ on 
 $[t_1,\infty)$ and $m''(x)/m(x)$ is increasing on $[0,t_1].$  We refer to \cite{SST} on the behavior of geodesics on a surface of revolution. 
 Then the Gaussian curvature $G(p)$ at a point $p\in M$ equals 
\begin{equation}\label{eq2.1}
G(p)=-\frac{m''(t(p))}{m(t(p))}.
\end{equation}
Therefore, 
the property that the
Gaussian curvature  is
nonpositive on   the domain  $t^{-1}\left( 
(t_1,\infty) \right)$
means that $m''(x)\geq 0$ on $[t_1,\infty)$
 and the one that the curvature function is
 decreasing 
along the subarc 
$t^{-1}([0,t_1])\cap\theta^{-1}(0)$ means that
the function  $m''(x)/m(x) $ is increasing
on $[0,t_1].$
Incidentally,
the property that $M$ is reflectively symmetric with respect to the equator $t=0$ is equivalent to the one that  $m(x)$ is an even function.
  
  In order to prove {\bf Theorem A}, it is sufficient to prove the following theorem.
  
  \begin{MTB}
  Let  
  $(\widetilde M,d\tilde s^2)=(R^1\times R^1,d\tilde{t}^2+m(\tilde{t})^2d\tilde{\theta}^2)$ denote the universal covering space of 
    the complete Riemannian manifold $(M, ds^2)$  with a warped product metric $ds^2=dt^2+m(t)^2d\theta^2$ 
 of the real line $(R^1,dt^2)$ and
 the unit circle $(S^1,d\theta^2).$
  Suppose that  there exists a positive number $t_1$ such that $m''(x)\geq 0$ on 
 $[t_1,\infty)$ 
  and the  function $m''(x)/m(x)$ is increasing on $[0,t_1].$
  If $m$ is even,
  then the structure of cut locus $C_q$ of a point $q\in\tilde\theta^{-1}(0)$ of $\widetilde M$ is given as follows:
  
\begin{enumerate} 
\item If $|\tilde t(q)|<t_0:=\sup\{t>0|m'(t)<0\}$, then
  $$C_{ q}=\{p\in\widetilde M|\tilde t( p)=-u,|\tilde\theta( p)|\geq\varphi(m(u)))\},$$
  where $u:=\tilde t( q)
$.  
\item If $|\tilde t(q)|\geq t_0,$
  $C_{q}$ is empty. 
   
\end{enumerate}
  \end{MTB}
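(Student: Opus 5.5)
Since $q$ lies on $\tilde\theta^{-1}(0)$ and the metric is invariant under $\tilde\theta\mapsto-\tilde\theta$, the cut locus $C_q$ is symmetric with respect to the meridian $\tilde\theta^{-1}(0)$. Because $m$ is even, we may also replace $q$ by its mirror image and assume $u=\tilde t(q)\ge 0$. I would work throughout with the Clairaut relation. A unit speed geodesic $\gamma$ from $q$ has Clairaut constant $\nu=m(\tilde t(\gamma(s)))\,\tilde\theta'(s)$ with $|\nu|\le m(u)$. A geodesic with $0<\nu<m(u)$ oscillates between the two levels $\pm\xi(\nu)$ exactly when $\nu>m(t_0)$; otherwise it escapes to an end. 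When it oscillates, the increase of $\tilde\theta$ between two consecutive tangencies, i.e.\ one ``half period'', equals $\varphi(\nu)$.

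\textbf{Step 1 (no cut points when $u\ge t_0$).} I would first show that $m'\ge0$ on $[t_0,\infty)$ and that $m$ has no interior local maximum beyond $t_0$. This uses $m''\ge0$ on $[t_1,\infty)$ together with the definition of $t_0$. Then every geodesic from $q$ with $\nu\neq\pm m(u)$ is a graph over the $\tilde t$ coordinate after leaving $q$, so it never returns to the level it left. In the universal cover, any two points are then joined by a unique geodesic, and there are no conjugate points along it. Hence $C_q=\emptyset$. The uniqueness follows by a Clairaut-type monotonicity argument: for a fixed endpoint level, $\Delta\tilde\theta$ is a strictly monotone function of $\nu$.

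\textbf{Step 2 (the case $u<t_0$: candidate cut points).} I would treat the geodesic $\gamma_\nu$ and its mirror image $\gamma_{-\nu}$ under reflection in $\tilde t=0$ composed with $\tilde\theta\mapsto-\tilde\theta$. The key computation is that $\gamma_\nu$ first reaches the level $\tilde t=-u$ after a $\tilde\theta$-change of exactly $\varphi(\nu)$-type quantities. The cleanest formulation is that the geodesics with constants $\nu$ and $\nu'$, where $\gamma$ is headed up versus down, meet on $\tilde t=-u$. For the extremal geodesic with $\nu=m(u)$, which starts tangentially to the parallel, the first meeting with its mirror geodesic occurs at $\tilde t=-u$, $\tilde\theta=\varphi(m(u))$. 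This suggests that the cut point of $\gamma_\nu$ is the first point where it meets the level $-u$ after its first tangency, provided this point lies at $|\tilde\theta|\ge\varphi(m(u))$. I would then show that the sector $\{\tilde t=-u,\ |\tilde\theta|\ge\varphi(m(u))\}$ is swept exactly once by these first meeting points. Each such point is reached by two distinct minimal geodesics of equal length, namely $\gamma$ and its reflection through the level $0$ combined with the appropriate symmetry. It therefore lies in $C_q$.

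\textbf{Step 3 (main obstacle: minimality up to that point and no other cut points).} The hard part is proving that $\gamma_\nu$ is minimal up to its first hit of $\tilde t=-u$ past the tangency, and that no point of $\tilde t\neq-u$ is a cut point. For this I would invoke the monotonicity of the half period function $\varphi$ on $(m(t_0),m(0))$. This is the step where the curvature hypotheses enter: $m''/m$ increasing on $[0,t_1]$ and $m''\ge0$ beyond $t_1$ should give that $\varphi$ is decreasing, adapting the argument of \cite{ST} and \cite{BCST}. The difficulty is that the monotone-curvature hypothesis holds only on $[0,t_1]$. I would split the integral defining $\varphi$ at $t_1$ and use convexity of $m$ on the outer part, which makes that contribution behave monotonically in $\nu$. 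Given that $\varphi$ is decreasing, a standard argument shows that two geodesics from $q$ cannot meet before the level $-u$: the first intersection of $\gamma_\nu$ with any other $\gamma_{\nu'}$ is forced to lie on $\tilde t=-u$, via an intermediate-value comparison of $\tilde\theta$ along the levels. Any cut point is either the endpoint of two minimal geodesics or a first conjugate point. Since conjugate points are limits of such intersections, this shows that $C_q$ is contained in the set above. Combined with Step 2, this gives equality. Step 1 then yields case 2, and Theorem A follows by projecting to $M$, where the meridian $\theta^{-1}(\pi)$ appears from the deck transformations.
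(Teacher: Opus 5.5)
\textbf{There are genuine gaps.} The most serious is the key lemma: the claim that $\varphi$ is decreasing on $(m(t_0),m(0))$ is not proved by the mechanism you propose.

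\textbf{The monotonicity of $\varphi$.} Splitting $\varphi(\nu)=2\int_0^{\xi(\nu)}f(x,\nu)\,dx$ at $x=t_1$ cannot work. Since $\partial f/\partial\nu>0$, the inner piece $\int_0^{t_1}f(x,\nu)\,dx$ is strictly \emph{increasing} in $\nu$ whenever $\xi(\nu)>t_1$, whatever the curvature does on $[0,t_1]$. Convexity on the outer part at best shows that $\int_{t_1}^{\xi(\nu)}f(x,\nu)\,dx$ decreases, and nothing compares the two rates.

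The missing idea is to first pass to a $\nu$-independent domain, as in Lemma 3.1:
$\varphi(\nu)=-2\int_0^\infty A(\xi(\sqrt{u(\tau,\nu)}))\,d\tau/(a^2\tau^2+1)$, with $A=\sqrt{a^2-m^2}/m'$. Here $\xi(\sqrt{u(\tau,\nu)})$ is decreasing in $\nu$. This reduces the monotonicity of $\varphi$ to the \emph{pointwise} statement $A'\le 0$ on $(0,t_0)$, i.e.\ $F=m'^2+(m''/m)(a^2-m^2)\ge 0$. The split at $t_1$ is then made for $F$, not for the integral:
\begin{itemize}
\item on $[0,\min\{t_0,t_1\}]$, $F(0)=0$ and $F'=(m''/m)'(a^2-m^2)\ge 0$;
\item on $[t_1,t_0]$, both terms of $F$ are nonnegative.
\end{itemize}

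\textbf{Weak monotonicity.} The hypotheses only give $\varphi$ non-increasing: if $m=a\cos kt$ near $0$, then $\varphi\equiv\pi/(ak)$ for $\nu$ near $m(0)$. So your claim that the sector is ``swept exactly once'' fails. The minimality argument must also rule out a shorter geodesic $\gamma^{(u)}_{\nu_1}$ with $\varphi(\nu_1)=\varphi(\nu_0)$. The paper does this with $l'(\nu)=\nu\varphi'(\nu)$ (Lemma 2.1), which your plan lacks.

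\textbf{Step 1.} This step rests on a false claim. For $u>t_0$, take a geodesic from $q$ heading toward the neck with $m(t_0)<|\nu|<m(u)$. It becomes tangent to the parallel where $m=|\nu|$, inside $(t_0,u)$, and turns back; this already happens for $m=\cosh t$. So geodesics from $q$ are not graphs over $\tilde t$, and your uniqueness argument via monotonicity of $\Delta\tilde\theta$ does not apply. What works is the paper's argument:
\begin{itemize}
\item A geodesic staying in $\tilde t\ge t_0$ has no conjugate points, because $m''\ge 0$ there. Indeed, $m''$ is negative only on an initial interval $[0,s)$, on which $m'<0$, so $s\le t_0$.
\item A geodesic crossing $\tilde t=t_0$ has $|\nu|<m(t_0)=\min m$. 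It is therefore never tangent to a parallel and has no conjugate points by Proposition 7.2.2 of [SST].
\item On the complete simply connected $\widetilde M$, absence of conjugate points along all geodesics from $q$ forces $C_q=\emptyset$.
\end{itemize}

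\textbf{Step 2.} The ``mirror image'' you describe (reflection in $\tilde t=0$ composed with $\tilde\theta\mapsto-\tilde\theta$) does not fix $q$ when $u\ne 0$. The correct pair is $\gamma^{(u)}_\nu$ and $\beta^{(u)}_\nu$ with the \emph{same} $\nu$, leaving $q$ downward and upward respectively. By evenness of $m$, they meet at $(-u,\varphi(\nu))$ after the same length $l(\nu)$ (Lemmas 4.1--4.2).
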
 
\begin{remark}
By {\bf Theorem B},
the cut locus of any point of the upper half plane $( (0,\infty)\times R^1, d\tilde t^2+m(\tilde t)^2d\tilde\theta^2)$ with totally geodesic boundary is empty. This property plays  a role of a suitable model surface  
when we apply  the Toponogov comparison theorem to open geodesic triangles standing on the totally geodesic boundary of a Riemannian manifold
(see \cite{KT1} and \cite{KT2}).
\end{remark}

  From now on,  
  $(\widetilde M,d\tilde s^2)=(R^1\times R^1,d\tilde{t}^2+m(\tilde{t})^2d\tilde{\theta}^2)$ denotes the universal covering space of 
    the complete Riemannian manifold $(M,ds^2).$  
  For each $\nu\in[0,m(0)],$ let $\gamma_{\nu}:[0,\infty)\to \widetilde M$ and 
$\beta_{\nu}:[0,\infty)\to \widetilde M$ denote the unit speed geodesic segments emanating from a common point $p\in \tilde t^{-1}(0)$ with Clairaut constant $\nu$ such that
$(\tilde t\circ\gamma_{\nu})'(0)\leq 0\leq (\tilde t\circ\beta_{\nu})'(0).$
Note that $\gamma_{\nu}$ and $\beta_{\nu}$ are reflectively symmetric
with respect to the parallel arc $\tilde t=0,$ since $m$ is even.
Then for each $\nu\in(m(t_0),m(0)),$ 
where $t_0:=\inf\{t>0| m'(t)<0\}$ and $m(t_0):=\inf m$ when $t_0=\infty,$ 
$\gamma_{\nu}$ (respectively $\beta_{\nu}$) intersects the parallel arc $\tilde t=0$ again at a point $\gamma_{\nu}(l(\nu))=\beta_{\nu}(l(\nu)).$
If we notice that $\gamma_{\nu}$ (respectively $\beta_{\nu}$) is tangent to the parallel arc $\tilde t=-\xi(\nu)$   (respectively $\tilde t=\xi(\nu)$), where
$\xi(\nu)=(m|_{[0,t_0)})^{-1}(\nu),$ before their second intersections with $\tilde t=0,$
then from \cite[Proposition 7.1.3]{SST}
it follows that
\begin{equation*}
\tilde\theta(\gamma_{\nu}(l(\nu)))-\tilde\theta(\gamma_{\nu}(0))=\varphi(\nu),
\end{equation*}
where \begin{equation}\label{eq2.2}
\varphi(\nu):=2\int_0^{\xi(\nu)}\frac{\nu}{m(x)\sqrt{m(x)^2-\nu^2}}dx, 
\end{equation} 
and that the length $L(\gamma_{\nu}|_{[0,l(\nu)]})=l(\nu)$ of the segment $\gamma_{\nu}|_{[0,l(\nu)]}$ is equal to
\begin{equation}\label{eq2.3}
L(\gamma_{\nu}|_{[0,l(\nu)]})=2\int_0^{\xi(\nu)}\frac{m(x)}{\sqrt{m(x)^2-\nu^2}}dx.
\end{equation}

 \begin{lemma}\label{lem2.1}
 The derivative $l'(\nu)$ of $l(\nu)$
 equals
 \begin{equation}\label{eq:lem2(1)}
 l'(\nu)=\nu\varphi'(\nu)
 \end{equation}
 on $(m(t_0),m(0)).$
 \end{lemma}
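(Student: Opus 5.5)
The plan is to compare the two integrands directly. Subtracting $\nu$ times the integrand of \eqref{eq2.2} from the integrand of \eqref{eq2.3} gives
\begin{equation*}
\frac{m(x)}{\sqrt{m(x)^2-\nu^2}}-\frac{\nu^2}{m(x)\sqrt{m(x)^2-\nu^2}}=\frac{\sqrt{m(x)^2-\nu^2}}{m(x)}.
\end{equation*}
Hence
\begin{equation*}
l(\nu)-\nu\varphi(\nu)=2\int_0^{\xi(\nu)}\frac{\sqrt{m(x)^2-\nu^2}}{m(x)}\,dx=:F(\nu).
\end{equation*}
This integrand vanishes at the upper endpoint $x=\xi(\nu)$, because $m(\xi(\nu))=\nu$. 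Therefore, when we differentiate $F$ by the Leibniz rule, the boundary term drops out and we obtain
\begin{equation*}
F'(\nu)=-2\int_0^{\xi(\nu)}\frac{\nu}{m(x)\sqrt{m(x)^2-\nu^2}}\,dx=-\varphi(\nu).
\end{equation*}
Then
\begin{equation*}
l'(\nu)=\frac{d}{d\nu}\bigl(\nu\varphi(\nu)\bigr)+F'(\nu)=\varphi(\nu)+\nu\varphi'(\nu)-\varphi(\nu)=\nu\varphi'(\nu),
\end{equation*}
which is \eqref{eq:lem2(1)}.

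The main obstacle is justifying these manipulations. Differentiating $F$ is mild: the differentiated integrand has only an integrable singularity $(\xi(\nu)-x)^{-1/2}$ at the endpoint, provided $m'(\xi(\nu))\neq0$. The real issue is that $\varphi$ itself is an improper integral. Its differentiability on $(m(t_0),m(0))$ is implicitly assumed in the statement, but a careful argument should establish it.

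To do this, I would substitute $x=\xi(s)$ with $s\in[\nu,m(0)]$, which is legitimate since $m$ is strictly decreasing on $[0,t_0)$. The integrals become
\begin{equation*}
\varphi(\nu)=2\int_\nu^{m(0)}\frac{\nu}{s\sqrt{s^2-\nu^2}}\,\frac{ds}{|m'(\xi(s))|},
\qquad
F(\nu)=2\int_\nu^{m(0)}\frac{\sqrt{s^2-\nu^2}}{s}\,\frac{ds}{|m'(\xi(s))|}.
\end{equation*}
The possible vanishing of $m'$ at $x=0$ is harmless because $m$ is even with $m''(0)\ne0$ generically; alternatively one keeps the $x$-variable near $0$ and uses the $s$-variable only near the turning point. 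With the remaining singularity of the form $(s-\nu)^{\pm1/2}$, differentiation under the integral sign for $F$ is standard and gives $F'=-\varphi$ directly.

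If one prefers not to prove that $\varphi$ is differentiable separately, the cleaner route is to note that $l=\nu\varphi+F$ with $F'=-\varphi$. Then $l$ is differentiable if and only if $\varphi$ is, and the identity $l'=\nu\varphi'$ follows wherever either side exists. Differentiability of $l$ itself follows from the smooth dependence of geodesics on initial data, via transversality of $\gamma_\nu$ with $\tilde t=0$ at $l(\nu)$ (here $\nu<m(0)$).
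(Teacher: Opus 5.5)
Your proposal is correct and is essentially the paper's proof. The paper uses the same splitting $\frac{m}{\sqrt{m^2-\nu^2}}=\frac{\sqrt{m^2-\nu^2}}{m}+\frac{\nu^2}{m\sqrt{m^2-\nu^2}}$ to get $l(\nu)=2\int_0^{\xi(\nu)}\frac{\sqrt{m(x)^2-\nu^2}}{m(x)}\,dx+\nu\varphi(\nu)$ and then just says ``by differentiating''; your boundary-term computation giving $F'=-\varphi$, and your argument that $\varphi$ is differentiable because $l$ is smooth (transversal return to $\tilde t=0$), make explicit what the paper leaves implicit.

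One small sharpening: $m'(0)=0$ always by evenness, and the hypotheses force $m''(0)<0$ whenever $(m(t_0),m(0))$ is nonempty. Otherwise $m''\geq 0$ on $[0,\infty)$, so $m'\geq 0$ there and no $t$ has $m'(t)<0$. So your ``generically $m''(0)\neq 0$'' can be replaced by ``always.''
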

\begin{proof}
If we note that $\frac{m}{\sqrt{m^2-\nu^2}}=\frac{\sqrt{m^2-\nu^2}}{m}+\frac{\nu^2}{m\sqrt{m^2-\nu^2}}$ holds, we get, by \eqref{eq2.2} and \eqref{eq2.3},
$$l(\nu)=2\int_0^{\xi(\nu)}\frac{\sqrt{m(x)^2-\nu^2}}{m(x)}dx+\nu\varphi(\nu).$$
By differentiating the above equation, we have $l'(\nu)=\nu\varphi'(\nu).$
$\qedd$
\end{proof}
Here we give an elementary proof of the claim stated in Remark \ref{rem1.1}.
\begin{lemma}\label{lem2.2}
There exists a positive number $t_1$ such that $m''(t_1)\geq0.$
\end{lemma}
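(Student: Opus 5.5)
Under the hypotheses of Remark~\ref{rem1.1}, the Gaussian curvature $-m''/m$ is decreasing along the upper half meridian; equivalently, $m''(x)/m(x)$ is increasing on $[0,\infty)$. We have to show that $m''(t_1)\ge 0$ for some $t_1>0$. The plan is to argue by contradiction: assume $m''(x)<0$ for every $x>0$, and show that $m$ cannot then stay positive on all of $[0,\infty)$, which is required because the metric is complete with $m>0$ everywhere.

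First, since $m$ is even and smooth, we have $m'(0)=0$. If $m''<0$ on $(0,\infty)$, then $m'$ is strictly decreasing there, so $m'(x)<m'(0)=0$ for all $x>0$. Fix some $a>0$ and put $c:=m'(a)<0$. For $x\ge a$, concavity of $m$ on $[0,\infty)$ gives
\[
m(x)\le m(a)+c(x-a).
\]
The right-hand side tends to $-\infty$ as $x\to\infty$. This contradicts $m>0$, so there exists $t_1>0$ with $m''(t_1)\ge0$.

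The argument as written does not use the monotonicity of $m''/m$ at all: positivity and evenness of $m$ already force $m''$ to be nonnegative somewhere on $(0,\infty)$. Monotonicity is needed only for the stronger statement of Remark~\ref{rem1.1}. Once $m''(t_1)\ge0$, we get $m''(t_1)/m(t_1)\ge0$, and since $m''/m$ is increasing, $m''(x)/m(x)\ge 0$ for all $x\ge t_1$. This gives $m''\ge0$ on $[t_1,\infty)$, i.e.\ the Gaussian curvature is nonpositive on $t^{-1}((t_1,\infty))$ and, by evenness, on $t^{-1}((-\infty,-t_1))$. I would add this one-line consequence after the lemma.

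The only step needing care is the concavity bound: it needs $m'(a)$ strictly negative, not merely nonpositive. That is why $a$ is taken strictly positive, where strict monotonicity of $m'$ gives $m'(a)<m'(0)=0$. I do not expect any real obstacle here.
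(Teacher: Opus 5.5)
Your proof is correct and is essentially the paper's argument: assume $m''<0$, use evenness to get $m'(0)=0$ and hence $m'(a)<0$ for some $a>0$ (the paper takes $a=1$), then integrate to force $m(x)\to-\infty$, contradicting $m>0$. Your contradiction hypothesis, $m''<0$ on $(0,\infty)$, is the exact negation of the lemma and is slightly sharper than the paper's hypothesis $m''<0$ on $[0,\infty)$; your added remark using the monotonicity of $m''/m$ correctly completes the claim of Remark~\ref{rem1.1}.
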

\begin{proof}
By supposing that $m''(x)<0$ for all $x\in[0,\infty),$ we will get a contradiction. 
Since  $m'(0)=0$ and 
 $m'(x)$ is strictly decreasing on $[0,\infty),$ 
$0=m'(0)>m'(1)>m'(x)$ for all $x>1.$
Hence,  we get 
$m(x)-m(1)=\int_1^x m'(x)dx<\int_1^x m'(1)dx$ and  $\lim_{x\to\infty} m(x)=-\infty.$ This is a contradiction. 
$\qedd$
\end{proof}
\section{Decline of the half period function}

%%%%%%%%%%%
By imitating the proof of \cite[Lemma 1]{TASY}, we get
\begin{lemma}\label{lem3.1}
     The half period function $\varphi(\nu)$
     equals  
    \begin{equation}
      \varphi(\nu)=-2\int_0^\infty A\circ \xi\left(\sqrt{u(\tau,\nu)}\right)/({a^2\tau^2+1}) d\tau,
       \label{eq3.1}
    \end{equation}
  where $A(x):={\sqrt{a^2-m(x)^2}}/{m'(x)},$   $a:=m(0)$ and $u(\tau,\nu):={\nu^2(a^2\tau^2+1)}/{(\tau^2\nu^2+1)}$.
\end{lemma}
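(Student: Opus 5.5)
The plan is a direct change of variables in \eqref{eq2.2}. First substitute $y=m(x)$, and then replace $y$ by a new variable $\tau$ defined through $y^2=u(\tau,\nu)$.

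Fix $\nu\in(m(t_0),m(0))$. On $(0,\xi(\nu)]$ we have $m'<0$, because $\xi(\nu)<t_0$. Hence $m|_{[0,\xi(\nu)]}$ is a strictly decreasing diffeomorphism from $(0,\xi(\nu)]$ onto $[\nu,a)$, where $a=m(0)$. Putting $y=m(x)$, so that $x=\xi(y)$ and $dx=dy/m'(\xi(y))$, \eqref{eq2.2} becomes
$$\varphi(\nu)=-2\int_\nu^a\frac{\nu}{y\sqrt{y^2-\nu^2}}\,\frac{dy}{m'(\xi(y))}.$$
Both integrals are improper: one at $x=\xi(\nu)$, i.e. $y=\nu$, and one at $x=0$, where $m'(0)=0$, i.e. $y=a$. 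I will do the substitution on compact subintervals $[\varepsilon,\xi(\nu)-\varepsilon]$ and let $\varepsilon\to0$. The integrands keep a fixed sign, so monotone convergence justifies passing to the limit, and no finiteness of $A$ at $0$ is needed.

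Next I substitute $y=\sqrt{u(\tau,\nu)}$ with $\tau\in(0,\infty)$. Since $u(0,\nu)=\nu^2$, $u(\tau,\nu)\to a^2$ as $\tau\to\infty$, and $u$ is strictly increasing in $\tau$, this maps $(0,\infty)$ onto $(\nu,a)$. The key algebraic identities, each a one-line computation, are
$$y^2-\nu^2=\frac{\nu^2\tau^2(a^2-\nu^2)}{\tau^2\nu^2+1},\qquad a^2-y^2=\frac{a^2-\nu^2}{\tau^2\nu^2+1},\qquad \frac{\partial u}{\partial\tau}=\frac{2\nu^2\tau(a^2-\nu^2)}{(\tau^2\nu^2+1)^2}.$$
Using $dy/y=du/(2u)$, they give
$$\frac{\nu\,dy}{y\sqrt{y^2-\nu^2}}=\frac{\sqrt{a^2-\nu^2}}{(a^2\tau^2+1)\sqrt{\tau^2\nu^2+1}}\,d\tau=\frac{\sqrt{a^2-y^2}}{a^2\tau^2+1}\,d\tau.$$
The last equality uses the second identity. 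Substituting this into the integral above turns the integrand into $-2\sqrt{a^2-m(\xi(y))^2}/\bigl(m'(\xi(y))(a^2\tau^2+1)\bigr)$, since $m(\xi(y))=y$. With $y=\sqrt{u(\tau,\nu)}$ this is exactly $-2A\circ\xi(\sqrt{u(\tau,\nu)})/(a^2\tau^2+1)$, which yields \eqref{eq3.1}.

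The main obstacle is only bookkeeping. The computation of $\partial u/\partial\tau$ must be carried out carefully so that the factor $\sqrt{a^2-\nu^2}$ recombines into $\sqrt{a^2-y^2}$. The limits must also be handled correctly: the singularity at $y=\nu$ corresponds to $\tau=0$, and the one at $y=a$ to $\tau=\infty$. Finally, one has to check that the orientation reverses nowhere, since $u$ is increasing in $\tau$ and $y$ increases as $x$ decreases, so the minus sign coming from $m'<0$ is the only one that survives.
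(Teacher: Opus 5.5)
Your proof is correct and takes essentially the same route as the paper. The paper gives no details of its own; it says only that the formula follows by imitating the proof of Lemma 1 in \cite{TASY}, which is exactly this two-step substitution: $y=m(x)$, then $y^2=u(\tau,\nu)$, equivalently $\tau^2=(y^2-\nu^2)/(\nu^2(a^2-y^2))$. Your algebraic identities, the orientation check, and the handling of the endpoint singularities via compact subintervals and monotone convergence are all sound.
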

%%%%%%%%%%%%%%%%%%%%%%%%%%%%%%%%%%%

\begin{proposition}\label{prop3.2}
    If $m''(x)/m(x)$ is increasing on $(0,t_1)$ and $m''(t)\geq 0$ on $[t_1,\infty)$, then $A(x)$ is decreasing on $(0,t_0),$ and 
$\varphi(\nu)$ is decreasing on $(m(t_0),m(0))$. 
\end{proposition}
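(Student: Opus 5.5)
The plan is to prove the monotonicity of $A$ first, by a direct derivative computation, and then to read off the monotonicity of $\varphi$ from the integral representation \eqref{eq3.1} of Lemma \ref{lem3.1}.

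\emph{Step 1: $A$ is decreasing on $(0,t_0)$.} On $(0,t_0)$ we have $m'<0$ and $m<a=m(0)$, so $A$ is smooth there and negative. Differentiating gives
$$A'(x)=-\frac{m(x)\,g(x)}{m'(x)^2\sqrt{a^2-m(x)^2}},\qquad g:=m'^2+\frac{m''}{m}\,(a^2-m^2).$$
Hence it suffices to show $g\ge 0$ on $(0,t_0)$.

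We split into two ranges.
\begin{enumerate}
\item On $[t_1,\infty)\cap(0,t_0)$ we have $m''\ge 0$, so $g\ge m'^2>0$ immediately.
\item On $[0,t_1]$, put $K:=m''/m$, which is increasing by hypothesis. Using $m''=Km$, the cross terms cancel:
$$g'=2m'Km+K'(a^2-m^2)-2Kmm'=K'(a^2-m^2)\ge 0.$$
Since $m'(0)=0$ and $m(0)=a$, we have $g(0)=0$. Therefore $g\ge 0$ on $[0,t_1]$.
\end{enumerate}
Together these give $A'\le 0$ on $(0,t_0)$. Where $K$ is only weakly increasing, this yields monotonicity in the non-strict sense. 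I expect this step to be the main obstacle: one must find the right auxiliary function $g$ whose derivative collapses to $K'(a^2-m^2)$. The cancellation above is the key point. Strictness, if it is wanted, would need a short extra argument using $m'^2>0$ on $(0,t_0)$.

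\emph{Step 2: $\varphi$ is decreasing.} Fix $\tau\ge 0$. A direct computation gives
$$\partial_\nu u(\tau,\nu)=\frac{2\nu(a^2\tau^2+1)}{(\tau^2\nu^2+1)^2}>0.$$
Moreover $\nu^2\le u(\tau,\nu)\le a^2$, because $u\le a^2$ is equivalent to $\nu^2\le a^2$. Thus $\sqrt{u(\tau,\nu)}\in(m(t_0),a]$ lies in the domain of $\xi$, and it increases with $\nu$. Since $m$ is decreasing on $[0,t_0)$, $\xi$ is decreasing. Hence $\nu\mapsto\xi(\sqrt{u(\tau,\nu)})$ is decreasing. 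Composing with the decreasing function $A$ from Step 1, $\nu\mapsto A\circ\xi(\sqrt{u(\tau,\nu)})$ is increasing for every $\tau$. The weight $1/(a^2\tau^2+1)$ is positive and independent of $\nu$, so in \eqref{eq3.1} the integral is increasing in $\nu$, and the factor $-2$ makes $\varphi$ decreasing on $(m(t_0),m(0))$.

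The endpoint value $\sqrt u=a$, i.e.\ $\xi=0$, where $A$ is defined only by a limit, occurs on a null set, or is handled by continuity, so it does not affect the comparison of the integrals.
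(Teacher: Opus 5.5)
Your proof is correct and follows the paper's argument essentially verbatim. It uses the same auxiliary function $F=m'^2+\frac{m''}{m}(a^2-m^2)$ with $F'=(m''/m)'(a^2-m^2)$ and $F(0)=0$, the same split at $t_1$, and then Lemma \ref{lem3.1}, whose monotonicity consequence you spell out more explicitly than the paper does. One thing to tighten: $g'\ge 0$ relies on $m\le a$, so the integration from $g(0)=0$ should be done on $[0,\min\{t_0,t_1\}]$ (the paper's $t_2$) rather than on all of $[0,t_1]$; since only $(0,t_0)$ matters, this is all you need.
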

\begin{proof}
 It is easy to check that
    \begin{equation}\label{eq:prop3.2(1)}
        A'(x) 
        =\frac{-m(x)}{m'(x)^2\sqrt{a^2-m(x)^2}}F(x),
    \end{equation}
   where
    \begin{equation}\label{eq:prop3.2(2)}
   F(x):=m'(x)^2+\frac{m''(x)}{m(x)}(a^2-m(x)^2)
   \end{equation} 
 Hence, by Lemma \ref{lem3.1} it  suffices to prove that 
   $F(x)\geq 0$ on $[0,t_0].$
   Since
   $
        F'(x)=\left({m''(x)}/{m(x)}\right)'(a^2-m(x)^2)
    $
    and since $m''(x)/m(x)$ is increasing on $(0,t_1)$, we get
        $$F'(x)\geq 0$$ 
    on $[0,t_2],$  where $t_2:=\min\{t_0,t_1\}.$  This means that $F(x)$ is increasing on $[0,t_2]$ and then
        $$F(x)\geq F(0)=0$$
    on $[0,t_2].$
    Thus, if $t_2=t_0,$  $F(x)\geq0$ on $[0,t_0]$ and if $t_2=t_1<t_0$, by  
     \eqref{eq:prop3.2(2)}, it
     is  
     clear that $F(x)\geq0$ on $[t_1,t_0].$
     Therefore, by  \eqref{eq:prop3.2(1)},
     $A'(x)\leq 0$ on $(0,t_0)$ and 
     by Lemma \ref{lem3.1},   we can conclude that $\varphi(\nu)$ is decreasing on $(m(t_0),m(0))$.
     $\qedd$
\end{proof}
%(see \cite[Theorem 7.1.1]{SST}).

\section{The cut locus of a point in $\widetilde M$}
Choose any point $q\in\tilde t^{-1}\left((-t_0,t_0)\right)\cap\tilde\theta^{-1}(0)$
and put $\tilde t(q)=:u$.
For each $\nu\in(m(t_0),m(u)]$, let $\gamma^{(u)}_\nu:[0,\infty)\to\widetilde M$ (respectively $\beta^{(u)}_\nu:[0,\infty)\to\widetilde M$) denote the unit speed geodesic emanating from the point $q$ on the parallel arc $\tilde t=u$ with Clairaut constant $\nu$ such that $(\tilde t\circ\gamma^{(u)}_\nu)'(0)\leq 0$ (respectively $(\tilde t\circ\beta^{(u)}_\nu)'(0)\geq 0$).

\begin{lemma}\label{lem4.1} For each $\nu\in(m(t_0),m(u)),$ 
    both geodesics $\gamma_{\nu}^{(u)}\:and\:\beta_{\nu}^{(u)}$ intersect
    at 
    the same point $(\tilde t,\tilde\theta)^{-1}(-u,\varphi(\nu)).$ 
\end{lemma}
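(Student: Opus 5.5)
We compute $\tilde\theta$ along each geodesic with the Clairaut relation and use the reflection symmetry $\tilde t\mapsto-\tilde t$ coming from the evenness of $m$. Fix $\nu\in(m(t_0),m(u))$. We may assume $u\ge 0$: for $u<0$ the reflection $\tilde t\mapsto -\tilde t$ is an isometry that swaps the roles of $\gamma^{(u)}_\nu$ and $\beta^{(-u)}_\nu$, so that case follows from this one. If $\nu\in(m(t_0),m(u))$ then $m(u)>\nu$, hence $|u|<\xi(\nu)$.

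\emph{The geodesic $\gamma^{(u)}_\nu$.} It starts at $q$ with $\tilde t$ decreasing. Along it, $\tilde t$ is strictly monotone until $\tilde t=-\xi(\nu)$, where $m=\nu$ and the curve is tangent to that parallel. After the turning point $\tilde t$ increases, and it first reaches $-u$ on the way back. On each monotone piece the Clairaut relation gives
\[
d\tilde\theta/d\tilde t=\pm\frac{\nu}{m\sqrt{m^2-\nu^2}} .
\]
On the pieces where $\tilde t$ decreases, $d\tilde t<0$ and $d\tilde\theta>0$, so the minus sign applies. Summing the two pieces, the increment of $\tilde\theta$ from $q$ to the first return to $\tilde t=-u$ is
\[
\int_{-\xi(\nu)}^{u}\frac{\nu\,dx}{m\sqrt{m^2-\nu^2}}+\int_{-\xi(\nu)}^{-u}\frac{\nu\,dx}{m\sqrt{m^2-\nu^2}} .
\]
Since $m$ is even, the second integral equals $\int_u^{\xi(\nu)}$ and the first equals $\int_{-u}^{\xi(\nu)}$. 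The total is therefore $\int_{-\xi}^{\xi}=2\int_0^{\xi(\nu)}=\varphi(\nu)$, using \eqref{eq2.2}.

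\emph{The geodesic $\beta^{(u)}_\nu$.} It starts upward from $\tilde t=u$, turns at $\tilde t=\xi(\nu)$, and then descends. Because $|u|<\xi(\nu)$ and $m>\nu$ on $(-\xi,\xi)$, it crosses $\tilde t=-u$ transversally, so this crossing time is well defined. The same computation gives the increment
\[
\int_u^{\xi(\nu)}+\int_{-u}^{\xi(\nu)}=\varphi(\nu) .
\]
Hence both geodesics reach the point $(\tilde t,\tilde\theta)^{-1}(-u,\varphi(\nu))$.

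Alternatively, one can appeal to the symmetry of $\gamma_\nu$ and $\beta_\nu$ through a point of $\tilde t=0$ and to \cite[Proposition 7.1.3]{SST}.

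\emph{Where care is needed.} The main points are the orientation of $\tilde\theta$ and the signs. We take the Clairaut constant $\nu>0$ to mean $\tilde\theta$ is increasing along both geodesics. We also need that no geodesic has reached $\tilde t=-u$ before the stated time; this holds because $\tilde t$ is monotone on each piece and the turning points are the tangency parallels $\tilde t=\pm\xi(\nu)$. Finally, the integrals converge at the endpoints $\pm\xi(\nu)$ because $m'(\xi(\nu))\neq0$ for $\nu>m(t_0)$: $m'\le 0$ on $[0,t_0]$, and $m'(\xi(\nu))<0$ follows from the strictly decreasing inverse defining $\xi$. So the singularity is of order $(\xi-x)^{-1/2}$ and integrable.
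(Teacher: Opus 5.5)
Your argument is correct and is essentially the paper's proof. The paper likewise writes the $\tilde\theta$-increments of $\gamma^{(u)}_\nu$ and $\beta^{(u)}_\nu$ up to their crossing with $\tilde t=-u$ as sums of integrals of $f(x,\nu)=\nu/(m\sqrt{m^2-\nu^2})$ over the descending and ascending pieces (via Proposition 7.1.3 of [SST]), and uses $f(-x,\nu)=f(x,\nu)$ to identify both with $\varphi(\nu)$; it does this for general $u$, so your reduction to $u\ge 0$ is harmless but unnecessary.

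The one inaccurate point is your side remark that $m'(\xi(\nu))<0$ follows from strict monotonicity of $m$: it does not, as $-x^3$ at $0$ shows. The fact is nevertheless true here, because a zero of $m'$ in $(0,t_0)$ is ruled out by the monotonicity of $m''/m$ on $[0,t_1]$ together with $m''\ge 0$ on $[t_1,\infty)$. In any case, the paper takes the tangency of these geodesics to $\tilde t=\pm\xi(\nu)$ as given.
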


\begin{proof}
Let us review here the behavior of both geodesic $\gamma^{(u)}_{\nu}$ and $\beta^{(u)}_{\nu}.$
The geodesic
$\gamma^{(u)}_{\nu}$ (respectively
$\beta^{(u)}_{\nu}$) is tangent to the parallel arc $\tilde t=-\xi(\nu)$ (respectively $\tilde t=\xi(\nu)$) and intersects the arc 
$\tilde t=-u$ at a point $\gamma^{(u)}_\nu(s_1)$ (respectively $\beta_{\nu}^{(u)} (s_2)$).
    From
\cite[Proposition 7.1.3]{SST} it follows that 
    
\begin{equation}\label{eq:lemma4.1(1)}
    \tilde\theta(\gamma^{(u)}_{\nu})(s_1)=-
\int_u^{-\xi(\nu)}f(x,\nu)dx
+\int_{-\xi(\nu)}^{-u}
f(x,\nu)dx,
\end{equation}
\begin{equation}\label{eq4.2}
\tilde\theta(\beta^{(u)}_{\nu})(s_2)=
\int_u^{\xi(\nu)}f(x,\nu)dx
-\int^{-u}_{\xi(\nu)}
f(x,\nu)dx,
\end{equation}
where $f(x,\nu):=\frac{\nu}{m(x)\sqrt{m(x)^2-\nu^2}}.$
If we note that $f(-x,\nu)=f(x,\nu)$, we get
\begin{equation}\label{eq:lemma4.1(2)}
    \tilde\theta(\gamma_{\nu}^{(u)}(s_1))=\varphi(\nu)
    =\tilde\theta(\beta_{\nu}^{(u)}(s_2)).
   \end{equation} 
   Therefore, both geodesics 
intersect the common point $(\tilde t,\tilde\theta)^{-1}(-u,\varphi(\nu)).$
$\qedd$
\end{proof}

\begin{lemma}\label{lem4.2} 
    The two geodesic segments $\gamma^{(u)}_\nu|_{[0,s_1]}$ and $\beta^{(u)}_\nu|_{[0,s_2]}$ have the same length $l(\nu)$. In particular, $s_1=s_2=l(\nu)$ and   both geodesic segments $\gamma^{(u)}_\nu|_{[0,l(\nu)]}$ and $\beta^{(u)}_\nu|_{[0,l(\nu)]}$ intersect the common point $(\tilde t,\tilde\theta)^{-1}(-u,\varphi(\nu))$, where the number
$l(\nu)$ is defined in \eqref{eq2.3}.
\end{lemma}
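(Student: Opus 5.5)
We compute the lengths with the same Clairaut-relation bookkeeping used for $\tilde\theta$ in Lemma \ref{lem4.1}, now with the arclength integrand. Along a unit speed geodesic with Clairaut constant $\nu$ we have $m^2\tilde\theta'=\nu$ and $(\tilde t')^2=1-\nu^2/m^2$. Hence on each subarc where $\tilde t$ is monotone,
$$ds=\frac{m(\tilde t)}{\sqrt{m(\tilde t)^2-\nu^2}}\,|d\tilde t| .$$
Put $g(x,\nu):=m(x)/\sqrt{m(x)^2-\nu^2}$. This is even in $x$ because $m$ is even, and it is integrable near $x=\pm\xi(\nu)$ since $m'(\xi(\nu))\neq 0$ for $\nu\in(m(t_0),m(u))$. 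The integrability follows because $m$ is strictly decreasing on $(0,t_0)$ and $m''\ge 0$ forces $m'(\xi)<0$ strictly; otherwise the tangency points would not be reached in finite time. These are the same facts already used implicitly in \eqref{eq2.3} via \cite[Proposition 7.1.3]{SST}.

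Next we split each geodesic at its turning point. The geodesic $\gamma^{(u)}_\nu$ goes from $\tilde t=u$ down to $\tilde t=-\xi(\nu)$, where it is tangent, and then back up to $\tilde t=-u$. Note $|u|<\xi(\nu)$ because $\nu<m(u)$ and $m$ is decreasing on $[0,t_0)$. Therefore
$$s_1=\int_{-\xi(\nu)}^{u}g(x,\nu)\,dx+\int_{-\xi(\nu)}^{-u}g(x,\nu)\,dx .$$
Similarly, $\beta^{(u)}_\nu$ goes up from $u$ to $\xi(\nu)$ and then down to $-u$, so
$$s_2=\int_{u}^{\xi(\nu)}g(x,\nu)\,dx+\int_{-u}^{\xi(\nu)}g(x,\nu)\,dx .$$

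Now we use evenness. Substituting $x\mapsto -x$ gives $\int_{-\xi}^{-u}g=\int_{u}^{\xi}g$ and $\int_{-\xi}^{u}g=\int_{-u}^{\xi}g$. Consequently
$$s_1=s_2=\int_{-\xi}^{\xi}g\,dx+\int_{-\xi}^{u}g+\int_{-\xi}^{-u}g-\int_{-\xi}^{\xi}g .$$
To finish cleanly, write $\int_{-\xi}^{u}g=\int_{-\xi}^{0}g+\int_0^{u}g$ and $\int_{-\xi}^{-u}g=\int_{-\xi}^{0}g-\int_{-u}^{0}g$. Since $\int_{-u}^0 g=\int_0^u g$ by evenness, the $u$-terms cancel, and
$$s_1=2\int_{-\xi}^{0}g\,dx=2\int_0^{\xi(\nu)}g(x,\nu)\,dx=l(\nu)$$
by \eqref{eq2.3}. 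The same computation gives $s_2=l(\nu)$. The formula holds regardless of the sign of $u$, since the signed integrals handle both cases.

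Finally, combining $s_1=s_2=l(\nu)$ with Lemma \ref{lem4.1} shows that $\gamma^{(u)}_\nu(l(\nu))=\beta^{(u)}_\nu(l(\nu))=(\tilde t,\tilde\theta)^{-1}(-u,\varphi(\nu))$.

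The only delicate point is justifying the decomposition at the turning point: the geodesic must actually reach the parallel $\tilde t=\mp\xi(\nu)$, be tangent there, and return, with the improper integrals converging. We plan to cite \cite[Proposition 7.1.3]{SST} for this, exactly as was done for \eqref{eq2.3} and in Lemma \ref{lem4.1}, rather than reprove it. One more check is needed: between $u$ and the turning point the geodesic does not cross $\tilde t=-u$ earlier. This holds because $\tilde t$ is monotone on each subarc, so $s_1$ is indeed the first such intersection time.
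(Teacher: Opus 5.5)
Your argument is correct and is essentially the paper's. The paper also writes $L(\gamma^{(u)}_\nu|_{[0,s_1]})=-\int_u^{-\xi(\nu)}g\,dx+\int_{-\xi(\nu)}^{-u}g\,dx$ via \cite[Proposition 7.1.3]{SST}, uses $g(-x,\nu)=g(x,\nu)$ to obtain $l(\nu)$, and does the same for $\beta^{(u)}_\nu$; you just write out the cancellation of the $u$-terms explicitly.

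Delete the final ``check'' in your last paragraph, because it is false. When $u>0$, $\tilde t$ is monotone on the descending subarc from $u$ to $-\xi(\nu)$ and $-u\in(-\xi(\nu),u)$, so $\gamma^{(u)}_\nu$ crosses $\tilde t=-u$ \emph{before} reaching the tangency. Likewise, $\beta^{(u)}_\nu$ crosses it on its ascent when $u<0$. So $s_1$ (respectively $s_2$) need not be the first crossing time. This does not affect the proof: in Lemma \ref{lem4.1}, $s_1,s_2$ are the crossings after the tangency, which is exactly what your integrals measure.
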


\begin{proof} From \cite[Proposition 7.1.3]{SST} it follows that 
    \begin{equation}
        L(\gamma_\nu^{(u)}|_{[0,s_1]})=-\int_{u}^{-\xi(\nu)}g(x,\nu)dx+\int_{-\xi(\nu)}^{-u}g(x,\nu)dx,
    \end{equation}
    where $g(x,\nu):={m(x)}/{\sqrt{m(x)^2-\nu^2}}.$
    Since $g(x,\nu)=g(-x,\nu)$, 
    the length  
       $ L(\gamma^{(u)}_\nu|_{[0,s_1]})$ of $\gamma^{(u)}_\nu|_{[0,s_1]}$
        equals $l(\nu).$
 Applying the similar computation  above to  $\beta^{(u)}_\nu|_{[0,s_2]}$,  
       we get $L(\beta^{(u)}_\nu|_{[0,s_2]})=l(\nu).$
    Therefore, $\gamma^{(u)}_\nu|_{[0,s_1]}$ and $\beta_\nu^{(u)}|_{[0,s_2]}$ 
    have the same length.
    $\qedd$
\end{proof}

\begin{lemma}\label{lem4.3}
    If there exists a geodesic segment $\alpha$ joining the point $q\in\tilde t^{-1}(u)\cap\tilde\theta^{-1}(0)$ to a point in $p\in\tilde t^{-1}(-u)\cap\tilde\theta^{-1}((\varphi(m(u)),\infty)),$ then $\alpha=\gamma_{\nu}^{(u)}$ or $\beta_{\nu}^{(u)}$ for some $\nu\in( m(t_0),m(u)]$. 
\end{lemma}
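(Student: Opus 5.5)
Let $\alpha$ be a unit speed geodesic from $q=(u,0)$ to $p=(-u,\theta_p)$ with $\theta_p>\varphi(m(u))$, and let $\nu\ge 0$ be its Clairaut constant. The plan is to pin down $\nu$ by elimination, using the Clairaut relation $m(\tilde t(\alpha(s)))\cos\angle(\alpha'(s),\partial_{\tilde\theta})=\nu$ together with \cite[Proposition 7.1.3]{SST}. Since $\tilde\theta$ increases from $0$ to $\theta_p>0$, we may take $\nu>0$ (orienting so $\tilde\theta$ is nondecreasing). Then $\nu\le m(u)$, because $\alpha$ passes through the parallel $\tilde t=u$. We may assume $u\ge 0$ by the reflective symmetry. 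The case $u=0$ is handled in the same way, with $\gamma_\nu,\beta_\nu$ in place of $\gamma_\nu^{(u)},\beta_\nu^{(u)}$.

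\textbf{Step 1: excluding $\nu\le m(t_0)$.} Suppose $\nu\le m(t_0)=\inf m$ (so $t_0<\infty$, or $\nu$ equals the infimum). Then $m>\nu$ on $(-t_0,t_0)$, or $m\geq\nu$ with equality only at $\pm t_0$. Since $|u|<t_0$, the geodesic cannot have a turning point between the parallels $\tilde t=u$ and $\tilde t=-u$. Hence $\tilde t\circ\alpha$ is monotone on that stretch. By \cite[Proposition 7.1.3]{SST},
\[
\theta_p=\int_{-u}^{u}f(x,\nu)\,dx .
\]
This quantity is increasing in $\nu$ on $(0,m(u)]$, so it is at most $\int_{-u}^{u}f(x,m(t_0))\,dx$. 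I would then compare this with $\varphi(m(u))$, which is $\int$ over the longer path through $\pm\xi(m(u))=\pm u$, i.e.\ $\varphi(m(u))=2\int_0^u f(x,m(u))\,dx$. Because $f$ is increasing in $\nu$, we get $\int_{-u}^u f(x,\nu)\,dx\le 2\int_0^u f(x,m(u))\,dx=\varphi(m(u))$ for all $\nu\le m(u)$. This contradicts $\theta_p>\varphi(m(u))$. The same monotonicity argument excludes any $\nu\in(m(t_0),m(u)]$ with $\alpha$ crossing directly from $u$ to $-u$ without a turning point. Note that such a direct crossing is exactly the $\nu=m(u)$ boundary case, which is included in the families.

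\textbf{Step 2: $\nu\in(m(t_0),m(u)]$.} Now $\tilde t\circ\alpha$ oscillates between $-\xi(\nu)$ and $\xi(\nu)$, where $\xi(\nu)\ge u$. Starting at $q$, $\alpha$ leaves either downward or upward; these are exactly $\gamma^{(u)}_\nu$ and $\beta^{(u)}_\nu$, by uniqueness of geodesics with given initial point and direction. It remains to check that $\alpha$ is the segment of one of these curves: $\alpha$ is a piece of $\gamma_\nu^{(u)}$ or $\beta_\nu^{(u)}$, and the statement only asks for this identification. If later crossings of $\tilde t=-u$ must be excluded, I would use that each half-oscillation adds $\varphi(\nu)>0$ to $\tilde\theta$, together with Lemma \ref{lem4.1}.

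\textbf{Main obstacle.} The delicate point is the comparison in Step 1. One must check that $f(x,\nu)=\nu/(m\sqrt{m^2-\nu^2})$ is increasing in $\nu$, which is clear. One must also handle the endpoint singularity at $\nu=m(u)$, where the integral is improper but convergent since $m'(u)\neq0$ for $0<u<t_0$. Both are routine, so the argument essentially reduces to this monotonicity in $\nu$.
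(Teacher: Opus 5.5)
Your proposal is correct and is essentially the paper's own argument. You rule out a Clairaut constant $\nu\le m(t_0)$ because such a geodesic is never tangent to a parallel, so $\tilde\theta(p)=\int_{-u}^{u}f(x,\nu)\,dx<\int_{-u}^{u}f(x,m(u))\,dx=\varphi(m(u))$ by the monotonicity of $f$ in $\nu$; the case $u=0$ is trivial since $\alpha$ cannot return to $\tilde t=0$. You then identify $\alpha$ with $\gamma^{(u)}_\nu$ or $\beta^{(u)}_\nu$ by uniqueness of geodesics.

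Two small points to tighten. First, you should state that $\alpha$ has no turning point anywhere, not just between $\tilde t=\pm u$: a tangency with $m=\nu\le\inf m$ could only occur at a critical point of $m$, whose parallel is itself a geodesic, so uniqueness would force $\alpha$ to be that parallel. Second, your side remark that a direct crossing ``is exactly the $\nu=m(u)$ case'' is inaccurate, though it plays no role in the lemma.
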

\begin{proof}
By assuming that the Clairaut constant 
$\nu_1$ of $\alpha$ is not greater than $m(t_0),$
 we will get a contradiction. By the assumption,
 $\alpha$ cannot be tangent to any parallel arc. 
Thus $u\ne0,$ since $\alpha$ is a geodesic segment joining $q\in\tilde t^{-1}(0)$ to $ p\in\tilde t^{-1}(0)$ when $u=0.$
Therefore, we may assume that 
$u\in(-t_0,t_0)\setminus \{0\}$ and 
   $\nu_0:=m(u)> m(t_0).$ 
   If we notice that  $\alpha$ is not tangent to any parallel arc, we have, 
by \cite[Proposition 7.1.3]{SST},
\begin{align}\label{eq4.5}
       \varphi(\nu_0)< \tilde\theta(p)=\int_{-|u|}^{|u|}f(x,\nu_1)dx
        &=\int^{\xi(\nu_0)}_{-\xi(\nu_0)}f(x,\nu_1)dx.
        \end{align}
Since
    $\nu_1\leq m(t_0)< m(u)=\nu_0$, and 
    the function $f(x,\nu) $ is strictly increasing in $\nu\in(0,\nu_0)$ for each $x,$    we get, by \eqref{eq4.5},
     $\varphi(\nu_0)< \int^{\xi(\nu_0)}_{-\xi(\nu_0)}f(x,\nu_1)dx<\varphi(\nu_0),$
   which is a contradiction. 
$\qedd$
\end{proof}

\begin{proposition}\label{prop4.4}
    For each $\nu\in( m(t_0),m(u))$,  $\gamma_\nu^{(u)}|_{[0,l(\nu)]}$ and $\beta_\nu^{(u)}|_{[0,l(\nu)]}$ are minimal geodesic segments joining $q\in\tilde\theta^{-1}(0)$ to the point $(\tilde t,\tilde\theta)^{-1}(-u,\varphi(\nu)).$ In particular, the subarc $\{p\in\widetilde M|\tilde t(p)=-u, \tilde\theta(p)\geq\varphi(m(u))\}$ of the parallel arc $\tilde t=-u$ is a subset of $C_{q}$ and $\gamma_{m(u)}^{(u)}|_{[0,l(m(u))]}=\beta_{m(u)}^{(u)}|_{[0,l(m(u))]}$ is minimal. Here $l(m(0)):=\lim_{\nu \uparrow m(0)}l(\nu)$ and $\varphi(m(0)):=\lim_{\nu \uparrow m(0)}\varphi(\nu),$ when $u=0.$
\end{proposition}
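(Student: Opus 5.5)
We aim to show that for each $\nu\in(m(t_0),m(u))$ no geodesic from $q$ to $p_\nu:=(\tilde t,\tilde\theta)^{-1}(-u,\varphi(\nu))$ is shorter than $l(\nu)$. The minimal segments to $p_\nu$ exist by completeness. We first place $p_\nu$ in the region covered by Lemma \ref{lem4.3}. By Proposition \ref{prop3.2}, $\varphi$ is decreasing on $(m(t_0),m(0))$, so $\varphi(\nu)\ge\varphi(m(u))$. Lemma \ref{lem4.3} is stated for $\tilde\theta(p)>\varphi(m(u))$, and its proof uses only $\tilde\theta(p)\geq \varphi(m(u))$ together with the strict inequality of \eqref{eq4.5}; if the strict version is needed we use continuity instead. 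Hence every geodesic segment from $q$ to $p_\nu$, in particular a minimal one, is of the form $\gamma^{(u)}_{\mu}$ or $\beta^{(u)}_{\mu}$ for some $\mu\in(m(t_0),m(u)]$.

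Next we identify the parameter. The first return of $\gamma^{(u)}_\mu$ or $\beta^{(u)}_\mu$ to the parallel $\tilde t=-u$ is at $\tilde\theta=\varphi(\mu)$ and length $l(\mu)$, by Lemmas \ref{lem4.1} and \ref{lem4.2}. Later returns to $\tilde t=-u$ come after the geodesic has passed through a full period: it has oscillated between $\tilde t=\pm\xi(\mu)$ an extra time. Such a return lies at $\tilde\theta=\varphi(\mu)+2k\cdot(\text{half period})$ or at the symmetric intermediate crossing, with length increased accordingly. We must therefore show that a minimal segment reaching $p_\nu$ uses the first return with $\mu=\nu$. If $\varphi$ were strictly decreasing, $\varphi(\mu)=\varphi(\nu)$ would force $\mu=\nu$. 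In general we argue by length. If $\varphi$ is constant on an interval containing $\mu$ and $\nu$, then by Lemma \ref{lem2.1} $l'=\nu\varphi'=0$ there, so $l(\mu)=l(\nu)$ and the segments have equal length. For returns on later oscillations, the geodesic passes $\tilde t=-u$ at least twice more, and the extra portion has positive length; comparing with a first-return segment of the same endpoint shows it is not shorter. This comparison is the delicate point. We will handle it with the standard argument that a geodesic crossing the parallel $\tilde t=-u$ and coming back to it has a reflected copy forming a broken curve of equal length. A minimal segment cannot contain such a configuration, because the reflection symmetry of $m$ lets us replace part of it by its mirror image with the same length but a corner, contradicting minimality.

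With this in place, both $\gamma^{(u)}_\nu|_{[0,l(\nu)]}$ and $\beta^{(u)}_\nu|_{[0,l(\nu)]}$ realize the distance, so they are minimal. They are distinct geodesics to $p_\nu$ whenever $\nu<m(u)$. Hence $p_\nu$ is a cut point of $q$, and the subarc $\{\tilde t=-u,\ \tilde\theta\in\varphi((m(t_0),m(u)))\}$ lies in $C_q$. We then pass to the full claimed subarc. Since $\varphi$ is continuous and decreasing, and $\varphi(\nu)\to\infty$ as $\nu\downarrow m(t_0)$ (the geodesic asymptotically approaches the parallel where $m'=0$), the image of $\varphi$ is $(\varphi(m(u)),\infty)$. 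The cut locus is closed, so the endpoint $\tilde\theta=\varphi(m(u))$ is included too. The case $\tilde\theta\le -\varphi(m(u))$ follows from the symmetry $\tilde\theta\mapsto-\tilde\theta$.

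Finally, at $\nu=m(u)$ the geodesics $\gamma^{(u)}_{m(u)}$ and $\beta^{(u)}_{m(u)}$ both start tangent to the parallel $\tilde t=u$ and therefore coincide. The segment $\gamma^{(u)}_{m(u)}|_{[0,l(m(u))]}$ is a limit of the minimal segments $\gamma^{(u)}_\nu|_{[0,l(\nu)]}$ as $\nu\uparrow m(u)$, and limits of minimal segments are minimal. When $u=0$ we use the stated limits of $l$ and $\varphi$ as $\nu\uparrow m(0)$ in the same way. The main obstacle is the uniqueness step: excluding geodesics that reach $p_\nu$ only after additional oscillations, or with a different Clairaut constant giving the same $\varphi$-value, more cheaply than $l(\nu)$. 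Here I expect Lemma \ref{lem2.1} combined with the monotonicity of $\varphi$ to do the work. Because $l'(\mu)=\mu\varphi'(\mu)\le 0$, $l$ is nonincreasing, and following the endpoint along the family parametrized by $\mu$ should give the length comparison.
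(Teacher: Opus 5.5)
Your core step is the paper's. Lemma \ref{lem4.3} forces a minimal segment to $p_\nu$ to be some $\gamma^{(u)}_\mu$ or $\beta^{(u)}_\mu$ with $\mu\in(m(t_0),m(u)]$. Then $\varphi(\mu)=\varphi(\nu)$ plus monotonicity makes $\varphi$ constant between $\mu$ and $\nu$, so Lemma \ref{lem2.1} gives $l(\mu)=l(\nu)$. Your direct formulation avoids the paper's choice of $\nu_0$ as the minimum of its level set. Your observation that the proof of Lemma \ref{lem4.3} works under $\tilde\theta(p)\ge\varphi(m(u))$ is correct, and the paper's argument needs it too.

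The gap is in the step you single out as delicate, which the paper passes over by simply writing $\tilde\theta(x)=\varphi(\nu_1)$ and $L(\alpha)=l(\nu_1)$. Your reflection argument does not exclude later crossings. Evenness of $m$ gives only the reflection in $\tilde t=0$. The reflection in $\tilde t=-u$ is in general not an isometry, and the reflection in the meridian through the turning point maps the arc onto itself, so no corner is produced. Reflecting arcs between consecutive equator crossings is not enough either: for $u>0$, $\beta^{(u)}_\mu$ returns to $\tilde t=-u$ at $\tilde\theta=\varphi(\mu)+2\int_u^{\xi(\mu)}f(x,\mu)\,dx$ after crossing $\tilde t=0$ only once. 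Also, for $u>0$ the first crossing of $\tilde t=-u$ by $\gamma^{(u)}_\mu$ is at $\tilde\theta=\int_{-u}^{u}f(x,\mu)\,dx$, not at $\varphi(\mu)$. This is harmless, since for $\mu<m(u)$ that value is $<\varphi(m(u))\le\varphi(\nu)$.

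The missing idea is to use the two geodesics together. By Lemmas \ref{lem4.1} and \ref{lem4.2}, $\gamma^{(u)}_\mu$ and $\beta^{(u)}_\mu$ reach $(-u,\varphi(\mu))$ at the same parameter $l(\mu)$. For $\mu<m(u)$ they arrive with different velocities ($\tilde t$ increasing along one, decreasing along the other). Splicing one onto the continuation of the other gives a broken curve of the same length, so neither is minimal past $l(\mu)$. Two cases remain:

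For $\mu=m(u)$ with $u\ne0$, any later point where the geodesic touches $\tilde t=-u$ lies beyond two equator crossings. Reflecting the arc between them in $\tilde t=0$ produces a corner at the first crossing, which occurs at positive parameter.

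For $u=0$, the competitor with $\mu=m(0)$ is the equator itself, of length $m(0)\varphi(\nu)$. Lemma \ref{lem2.1} gives $\frac{d}{d\nu}\bigl(l-m(0)\varphi\bigr)=(\nu-m(0))\varphi'\ge0$, and $l-m(0)\varphi\to0$ as $\nu\uparrow m(0)$, so $l(\nu)\le m(0)\varphi(\nu)$. This computation is the precise form of your ``follow the endpoint along the family'' heuristic, which as stated is not yet an argument.
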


\begin{proof}
      Suppose that for some $\nu_0\in( m(t_0),m(u)),$  $\gamma_{\nu_0}^{(u)}|_{[0,l(\nu_0)]}$ is not a minimal geodesic
      segment joining $q$ to $x:=\gamma_{\nu_0}^{(u)}(l(\nu_0))$. 
      We may assume that $\nu_0=\min\{\nu\in( m(t_0),m(u))|\varphi(\nu)=\varphi(\nu_0)\}$. 
      Let $\alpha:[0,d(q,x)]\to \widetilde M$ denote a minimal geodesic segment joining $q$ to $x.$ 
      By Lemma \ref{lem4.3},
      $\alpha=\gamma_{\nu_1}^{(u)}$  or
      $\beta_{\nu_1}^{(u)}$
       for some $\nu_1\in( m(t_0),m(u)]$.
       Thus, we get 
        $\varphi(\nu_1)=\tilde\theta(x)=\varphi(\nu_0),$ 
        and
          \begin{equation}\label{eq:prop4.4(1)}
         L(\alpha)=l(\nu_1)<l(\nu_0).
         \end{equation}
      Since $\varphi$ is decreasing, $\varphi\equiv\varphi(\nu_0)$ on $[\nu_0,\nu_1]$ and by Lemma \ref{lem2.1},
       $l'(\nu)=\nu\varphi'(\nu)=0$ on $[\nu_0,\nu_1]$. 
       This contradicts \eqref{eq:prop4.4(1)}.
       Since $\gamma_{m(u)}^{(u)}|_{[0,l(m(u))]}=\beta_{m(u)}^{(u)}|_{[0,l(m(u))]}$ is the limit geodesic segment of 
       minimal geodesic segments $\gamma_{\nu}^{(u)}|_{[0,l(\nu)]}$
       as $\nu$ tends to $m(u),$  it is clear that $\gamma_{m(u)}^{(u)}|_{[0,l(m(u))]}=\beta_{m(u)}^{(u)}|_{[0,l(m(u))]}$ is  minimal.
$\qedd$
\end{proof}

\begin{proposition}\label{prop4.5}
    For any point  $q\in\tilde\theta^{-1}(0)\cap\tilde t^{-1}\left((-t_0,t_0)\right)$ the cut locus $C_q$ of the point $q$ is
      $$C_q=\{p\in\widetilde M|\tilde t(p)=-u,|\tilde\theta(p)|\geq\varphi(m(u))\},$$
        where $u:=\tilde t (q).$
\end{proposition}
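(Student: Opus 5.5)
Write $C:=\{p\in\widetilde M\mid \tilde t(p)=-u,\ |\tilde\theta(p)|\geq\varphi(m(u))\}$. I would prove the two inclusions $C\subset C_q$ and $C_q\subset C$ separately.

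\emph{The inclusion $C\subset C_q$.} Proposition \ref{prop4.4} already gives that the half $\tilde\theta\ge\varphi(m(u))$ lies in $C_q$. Reflection $\tilde\theta\mapsto-\tilde\theta$ is an isometry fixing $q$, so the half $\tilde\theta\le-\varphi(m(u))$ lies in $C_q$ as well. Since cut loci are closed, the endpoints, and the limiting values of $\varphi$ when $\varphi$ is not strictly decreasing, are also included. More precisely, because $\varphi$ is continuous and decreasing on $(m(t_0),m(u))$, its image is an interval $(\lim_{\nu\downarrow m(t_0)}\varphi,\varphi(m(u))]$. I then need $\varphi(\nu)\to\infty$ as $\nu\downarrow m(t_0)$, so that the image is $[\varphi(m(u)),\infty)$. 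If $t_0<\infty$, then $m'(t_0)=0$ and the integrand of $\varphi$ near $\xi(\nu)\to t_0$ makes $\varphi$ blow up logarithmically. If $t_0=\infty$, then $m(t_0)=\inf m$; here I would argue using $m''\ge0$ on $[t_1,\infty)$, which forces $m$ to be eventually constant $\inf m$ or else $t_0<\infty$, so the asymptotic flatness yields divergence. This endpoint analysis is a technical point I expect to be delicate.

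\emph{The inclusion $C_q\subset C$.} My plan is to show that every point $p\notin C$ has a unique minimal geodesic from $q$ that extends minimally beyond it. Equivalently, I will show that no geodesic from $q$ other than the $\gamma^{(u)}_\nu,\beta^{(u)}_\nu$, truncated at $l(\nu)$, ever stops minimizing, and that these stop exactly at $C$.

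\begin{enumerate}
\item I classify the geodesics from $q$ by Clairaut constant $\nu$ and direction. If $|\nu|\le m(t_0)$, or $|\nu|>m(u)$ is impossible, the geodesic never is tangent to a parallel in $(-t_0,t_0)$. Its $\tilde t$ is then monotone, or it escapes to $|\tilde t|>t_0$. I would show such geodesics are rays: no other geodesic reaches the same points.
\item For this I use the following device. Two distinct geodesics from $q$ meeting at a point $p$ must, by Lemma \ref{lem4.3}-type arguments using \cite[Proposition 7.1.3]{SST}, have $\tilde\theta$-displacement $\varphi(\nu)$ at the parallel $-u$. Moreover, crossings occur only on $\tilde t=-u$ after tangency. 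This uses the symmetry $f(-x,\nu)=f(x,\nu)$ and the monotonicity of $\int f(x,\nu)\,dx$ in $\nu$.
\item It follows that the first intersection of $\gamma^{(u)}_\nu$ with another geodesic from $q$ is at $\gamma^{(u)}_\nu(l(\nu))$, where it meets $\beta^{(u)}_\nu$. Hence the cut point along $\gamma^{(u)}_\nu$ is no later than $l(\nu)$. By Proposition \ref{prop4.4} it is exactly $l(\nu)$, and it lies in $C$.
\end{enumerate}

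For the remaining geodesics (those in step 1 and the meridian ones), I argue by contradiction. Suppose a cut point $p$ is not in $C$. By the Hebda local tree structure, or simply by the standard fact that a cut point is either conjugate or joined by two minimal geodesics, one of two cases holds. In the two-geodesics case, step 2 forces $p$ to lie on $\tilde t=-u$ with $|\tilde\theta(p)|$ in the range of $\varphi$, i.e.\ $p\in C$. In the conjugate case, I would show that conjugate points occur no earlier than the points of $C$. For this I invoke the endpoint property: a conjugate cut point is an endpoint of the cut locus, approximated by cut points with two minimal geodesics, hence lies in the closure of $C$, which is $C$.

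I expect the main obstacle to be step 2, i.e.\ ruling out intersections of two geodesics from $q$ off the parallel $-u$. The approach I would try first is to compare the $\tilde\theta$-values of two geodesics with Clairaut constants $\nu_1<\nu_2$ as functions of $\tilde t$, using the formula from \cite[Proposition 7.1.3]{SST} together with the decrease of $\varphi$ from Proposition \ref{prop3.2}.
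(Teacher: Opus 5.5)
\textbf{Steps 2 and 3 are false as stated.} The $\tilde\theta$-comparison you propose for them therefore cannot succeed.

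Take $u=0$ and $m(t_0)<\nu<\nu'<m(0)$ with $\varphi(\nu)/2<\varphi(\nu')<\varphi(\nu)$. Such pairs exist because $\varphi$ is continuous, nonincreasing and unbounded as $\nu\downarrow m(t_0)$. The arc $\gamma^{(0)}_\nu|_{[0,l(\nu)]}$ and the segment $\{\tilde t=0,\ 0\le\tilde\theta\le\varphi(\nu)\}$ bound a closed region $R\subset\{\tilde t\le 0\}$. At time $l(\nu')$ the geodesic $\beta^{(0)}_{\nu'}$ crosses the base of $R$ at $\tilde\theta=\varphi(\nu')$ into $R$. It then stays in $\{\tilde t<0\}$ until it returns to $\tilde t=0$ at $\tilde\theta=2\varphi(\nu')>\varphi(\nu)$, which is outside $R$. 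So it must hit $\gamma^{(0)}_\nu$ at an interior point of $\gamma^{(0)}_\nu|_{[0,l(\nu)]}$ with $\tilde t<0$.

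Thus two geodesics from $q$ do meet off the parallel $\tilde t=-u$, and the first meeting of $\gamma^{(u)}_\nu$ with another geodesic from $q$ can come before $l(\nu)$. These crossings involve geodesics that have already stopped minimizing. What you actually need concerns minimal geodesics only, and Proposition~\ref{prop4.4} supplies it outright: $\gamma^{(u)}_\nu|_{[0,l(\nu)]}$ is minimal and meets $\beta^{(u)}_\nu$ at time $l(\nu)$ with the same length. Hence the cut point along it is exactly $\gamma^{(u)}_\nu(l(\nu))$, with no crossing analysis needed.

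\textbf{Step 1 is sound, and it gives a direct repair.} Take a geodesic from $q$ with $|\nu|\le m(t_0)$. Every other geodesic from $q$ passing a given level on the same side of $\tilde\theta=0$ does so at a different $\tilde\theta$, because $f(x,\nu)$ is strictly increasing in $\nu$ and the oscillating geodesics only gain $|\tilde\theta|$ on later passes. So the geodesic is a ray. The inclusion $C_q\subset C$ then follows: for $p\notin C$, a minimal geodesic from $q$ to $p$ either lies on such a ray, or is an initial piece of some $\gamma^{(u)}_\nu$, $\beta^{(u)}_\nu$ or a mirror image. In the latter case its length is at most $l(\nu)$, and in fact less than $l(\nu)$ because $p\notin C$. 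Either way $p$ is an interior point of a minimal segment, hence not a cut point.

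This makes your two-geodesics/conjugate case split unnecessary. That is fortunate, because the conjugate case rests on a false claim: a conjugate cut point need not be an end point of $C_q$. Only the converse holds, and that converse is what the paper uses. The density of cut points with two minimal geodesics is Bishop's theorem.

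The paper argues differently and never shows that the non-tangent geodesics are rays. Via the local tree structure it reduces to a conjugate end point $y\notin C$. It then confines $y$ to $D^{\pm}$ using the limit rays $\gamma^{(u)}_{m(t_0)}$, $\beta^{(u)}_{m(t_0)}$, and excludes conjugate points along non-tangent geodesics by Proposition 7.2.2 of [SST].

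\textbf{The case $t_0=\infty$ in your first inclusion is argued wrongly.} There $m'<0$ on all of $(0,\infty)$, and $m''\ge0$ on $[t_1,\infty)$ gives $m'\uparrow 0$ and $m\downarrow\inf m$; $m$ is never eventually constant. The divergence $\varphi(\nu)\to\infty$ still holds, but it must be derived from $m'(x)\to0$ as $x\uparrow t_0$. For example, when $m(t_0)>0$ one has $\int_0^{t_0}dx/\sqrt{m(x)^2-m(t_0)^2}=\infty$, and Fatou's lemma gives the divergence. Also, since $\varphi$ decreases, its image is $[\varphi(m(u)),\infty)$, not the reversed interval you wrote.
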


\begin{proof}
   If we note that $\widetilde M$ is reflective symmetric with respect to the meridian $\tilde\theta=0,$ from Proposition \ref{prop4.4} it follows that the set 
   $$A_u:=\{p\in\widetilde M|\tilde t(p)=-u,|\tilde\theta(p)|\geq\varphi(m(u))\}$$
   is a subset of $C_q.$
    Next, we prove that $C_q$ is a subset of $A_u$. By supposing that there exists a cut point $y$ of $q$ such that $y\notin A_u,$
    we will get a contradiction.
    Without loss of generality, we may assume that $\tilde\theta(y)>0$ and $y$ is an end point of $C_q,$ since $C_q$ is a local tree \cite[Theorem 4.2.1]{SST}. 
     Since $\gamma_{\nu}^{(u)}|_{[0,l(\nu)]}$ and $\beta_{\nu}^{(u)}|_{[0,l(\nu)]}$ are minimal for each $\nu\in( m(t_0),m(u))$, $\gamma_{ m(t_0)}^{(u)}$ and $\beta^{(u)}_{ m(t_0)}$ are also minimal. If we note  that $\lim_{\nu\to m(t_0)}l(\nu)=\infty$,
    both $\gamma_{m(t_0)}^{(u)}$ and $\beta_{ m(t_0)}^{(u)}$ are rays, that is, any subarcs of finite length  are minimal. 
    The point $y$ is not in the closed domain cut off by two rays $\gamma_{ m(t_0)}^{(u)} $ and $\beta_{m(t_0)}^{(u)}.$ 
    This means that $y$ is a point of the domain $D^{+}$ or $D^{-}.$
    Here $D^{+}$ (respectively $D^{-}$)
    denotes the domain cut off by two rays
   $\beta_{m(t_0)}^{(u)}$ and  $\beta_{0}^{(u)}$ (respectively $\gamma_{ m(t_0)}^{(u)}$ and  $\gamma_{0}^{(u)}$).
   Choose any minimal geodesic segment $\alpha$ joining $q$ to $y,$ which is conjugate to $q$ along $\alpha,$ since $y$ is an end point of $C_q$. 
    By applying the Clairaut relation at
     $q$, we get
    the Clairaut constant $\nu_1$ of $\alpha$ is less than $ m(t_0).$
    Thus, $\alpha$ cannot be tangent to any parallel arc $\tilde t=c$ and by \cite[ Proposition 7.2.2 ]{SST}, we can conclude that $y$ is not a conjugate point of $ q$ along $\alpha,$ which is a contradiction. 
    $\qedd$
\end{proof}

\begin{proposition}\label{prop4.6}
Suppose that $t_0<\infty.$ Then
for any point $ q\in\tilde\theta^{-1}(0)$ with
$|\tilde t(q)|\geq t_0$,
the
 cut locus of $q$ 
    is empty.
\end{proposition}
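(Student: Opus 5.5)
We prove that when $|\tilde t(q)|\geq t_0$ no geodesic from $q$ ever ceases to be minimal, so $C_q=\emptyset$. Put $u:=\tilde t(q)$. Since $\widetilde M$ is reflectively symmetric with respect to $\tilde t=0$, we may assume $u\geq t_0>0$. By the definition of $t_0$ and the evenness of $m$, we have $m'\geq 0$ on $[t_0,\infty)$ and $m'\le0$ on $(-\infty,-t_0]$. Moreover, since $m''\ge0$ on $[t_1,\infty)$ and $m'$ changes sign at $t_0$, the point $t_0$ is a global minimum of $m$, and so $m(x)\geq m(t_0)$ for all $x$. (If necessary this follows from $m'\le 0$ on $[0,t_0]$ together with evenness.)

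Let $\alpha$ be a unit speed geodesic from $q$ with Clairaut constant $\nu$, so $|\nu|\le m(u)$. We split into three cases.

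\emph{Case $|\nu|<m(t_0)$.} Then $\alpha$ is never tangent to a parallel arc, because $m\geq m(t_0)>|\nu|$ everywhere. Hence $\tilde t\circ\alpha$ is strictly monotone and $\alpha$ is a proper curve with $|\tilde t|\to\infty$.

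\emph{Case $|\nu|=m(t_0)$.} Then $\alpha$ either is asymptotic to the parallel $\tilde t=t_0$ (or $-t_0$), or, when $u=t_0$, is that parallel itself.

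\emph{Case $|\nu|>m(t_0)$.} Since $\alpha$ starts on the side $\tilde t\ge t_0$ where $m$ is nondecreasing, it can only turn at the parallel $\tilde t=\xi^+(\nu)\ge t_0$, where $m(\xi^+(\nu))=|\nu|$ with $\xi^+(\nu)\geq t_0$. It cannot cross into $\tilde t<t_0$, because that would require passing through $m=m(t_0)<|\nu|$. Thus $\alpha$ stays in $\tilde t\geq t_0$.

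The core of the argument is a uniqueness statement: any two points of $\widetilde M$ are joined by at most one geodesic from $q$. We compare pairs of geodesics with different Clairaut constants using the formulas of \cite[Proposition 7.1.3]{SST}. For a geodesic reaching a target point $p$, $\tilde\theta(p)$ is given as an integral of $f(x,\nu)$ over the $\tilde t$-path, with a tangency contribution when the geodesic turns. Since $f(x,\nu)$ is strictly increasing in $\nu$ for each $x$, the same monotonicity argument as in Lemma \ref{lem4.3} applies. For fixed $\tilde t(p)$, the quantity $\tilde\theta(p)$ is a strictly monotone function of $\nu$ within each family: the monotone (non-turning) geodesics, and the turning geodesics with $\nu$ of a fixed sign. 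Combining these families shows that the map $\nu\mapsto$ geodesic is injective onto the points it reaches.

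An alternative approach, and the one we would try first, is to show that $q$ has no conjugate points, appealing to \cite[Proposition 7.2.2]{SST}. Along the non-turning geodesics this is exactly the argument used at the end of Proposition \ref{prop4.5}. For the turning geodesics, which live in the region $\tilde t\ge t_0$ where $m''\ge0$ (valid because $t_0\ge t_1$ can be arranged, or using Proposition \ref{prop3.2} style monotonicity), the curvature is nonpositive, so the Rauch comparison theorem excludes conjugate points.

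We then finish with an end-point argument modeled on Proposition \ref{prop4.5}. Suppose $C_q\ne\emptyset$. Since $C_q$ is a local tree \cite[Theorem 4.2.1]{SST} and, being in the universal cover (a plane), contains no cycles, it has an end point $y$, or else it is unbounded. An end point is conjugate to $q$ along a minimal segment, contradicting the absence of conjugate points. To rule out a cut locus with no end points, we would use the rays $\gamma$ with $\nu=\pm m(t_0)$, which are minimal as limits, as in the proof of Proposition \ref{prop4.5}. These rays together with the meridian rays cut $\widetilde M$ into domains, each foliated injectively by geodesics from $q$, so every point has a unique minimal geodesic. Since we are in a plane with no conjugate points, this gives $C_q=\emptyset$.

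The main obstacle is the region $t_0\le\tilde t<t_1$ when $t_1>t_0$. There the curvature sign is not controlled, so the Rauch argument fails and one must rely on the Clairaut/monotonicity injectivity argument instead. In that case we would use $m'\ge0$ on $[t_0,\infty)$ to show that $\tilde\theta$ at the second crossing of a parallel is monotone in $\nu$, repeating the computation of Lemma \ref{lem4.3}.
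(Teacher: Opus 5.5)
\textbf{Verdict: genuine gap.} Your ``alternative approach'' is exactly the paper's proof, but you never establish the curvature fact it rests on, and in your last paragraph you deny it. The paper's argument runs as follows. A geodesic from $q$ with $|\nu|<m(t_0)=\min m$ is never tangent to a parallel, so Proposition 7.2.2 of \cite{SST} excludes conjugate points. A geodesic with $|\nu|\ge m(t_0)$ stays in $\tilde t\ge t_0$, where $G\le 0$. With no conjugate points from $q$, $\exp_q\colon T_q\widetilde M\to\widetilde M$ is a diffeomorphism of the complete simply connected $\widetilde M$, so $C_q=\emptyset$. No local-tree, end-point or ``foliated domain'' argument is needed.

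The gap is that you never prove $G\le 0$ on $\tilde t^{-1}([t_0,\infty))$. Saying ``$t_0\ge t_1$ can be arranged, or Proposition~\ref{prop3.2} style monotonicity'' is not an argument. Your last paragraph then asserts the opposite, that the sign is uncontrolled for $t_0\le\tilde t<t_1$. It is controlled, by a short argument:
\begin{itemize}
\item[(i)] $m'(t_0)=0$ while $m'<0$ at points $t_n\uparrow t_0$. The mean value theorem gives $s_n\uparrow t_0$ with $m''(s_n)>0$, so $m''(t_0)\ge 0$.
\item[(ii)] If $t_0<t_1$, monotonicity of $m''/m$ on $[0,t_1]$ gives $m''(x)/m(x)\ge m''(t_0)/m(t_0)\ge 0$ on $[t_0,t_1]$.
\item[(iii)] On $[t_1,\infty)$, $m''\ge 0$ by hypothesis.
\end{itemize}
So your ``main obstacle'' region does not exist. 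This is also what justifies replacing $t_1$ by $\min\{t_0,t_1\}$.

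The fallback you propose for that region cannot work and should be dropped.
\begin{itemize}
\item[(a)] The computation of Lemma~\ref{lem4.3} uses fixed limits of integration. For a geodesic turning at $\tilde t=\xi^+(\nu)$, the limit moves with $\nu$ and the integrand is singular there. The pointwise monotonicity of $f(x,\nu)$ in $\nu$ therefore gives nothing. In fact $\tilde\theta$ at the return crossing tends to $+\infty$ as $|\nu|\downarrow m(t_0)$, so any monotonicity on this family goes the other way.
\item[(b)] More fundamentally, at a transversal crossing of a parallel, $d\tilde\theta/d\nu$ vanishes exactly when the crossing point is conjugate to $q$. The injectivity you want on the turning family is therefore equivalent to the absence of conjugate points along it. That does not follow from $m'\ge 0$ on $[t_0,\infty)$ alone.
\item[(c)] A counterexample: let the surface above the neck be a nearly hemispherical round zone followed by a flat cylinder. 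Turning geodesics from a point near the top of the zone that swing close to the neck refocus and produce cut points.
\end{itemize}
The curvature sign on $\tilde t\ge t_0$ is the indispensable input. With the short lemma above, your Jacobi-field (nonpositive curvature) argument completes the proof as in the paper.
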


\begin{proof}
We may assume that $\tilde t(q)\geq t_0.$
Choose any geodesic $\gamma:[0,\infty)\to \widetilde M$ emanating from $q.$
If the image  of $\gamma$ is a subset of $\tilde t^{-1}([t_0,\infty)),$ then $\gamma$ has no conjugate point of $q,$  since the Gaussian curvature is nonpositive on $\tilde t^{-1}([t_0,\infty)).$ If the geodesic pass through $\tilde t=t_0,$ then its Clairaut constant is less than $m(t_0).$ 
Thus, $\gamma$ is not tangent to any parallel and by \cite[Proposition 7.2.2]{SST} it has no conjugate point of $q.$
 Therefore, any geodesic emanating from $q$ has no conjugate point of $q.$ This implies that $C_q$ is empty.  
    $\qedd$
\end{proof}

From Propositions \ref{prop4.5} and \ref{prop4.6}, we have proved  {\bf Theorem B.}

\section*{Acknowledgement}
The author wishes to express his sincere gratitude to Professor Minoru Tanaka for  his insightful suggestions.

\vspace{3cm}
Nathaphon Boonnam, Sc.D., SFHEA\\
Faculty of Science and Industrial Technology\\
Prince of Songkla University\\
Suratthani 84000 THAILAND\\
e–mail : nathaphon.b@psu.ac.th

\end{document}